\documentclass[11pt, leqno, oneside]{amsart} 

\usepackage{amsmath,amsthm,amssymb} 
\usepackage{amsfonts} 
\usepackage{txfonts} 
\usepackage{thmtools} 

\usepackage{geometry} 

\usepackage[dvipsnames, svgnames]{xcolor} 

\usepackage[shortlabels]{enumitem} 

\usepackage[colorlinks,citecolor=blue,hypertexnames=false,urlcolor=blue,breaklinks=true, pagebackref]{hyperref} 

\usepackage{accents} 
\usepackage{exscale} 
\usepackage{dsfont} 
\usepackage{mathrsfs} 
\usepackage[T1]{fontenc} 

\usepackage{etoolbox}
\appto\appendix{\addtocontents{toc}{\protect\setcounter{tocdepth}{1}}}

\renewcommand*\backref[1]{\ifx#1\relax \else (p. #1) \fi} 

\numberwithin{equation}{section} 

\theoremstyle{plain}
\newtheorem{theorem}[equation]{Theorem} 
\newtheorem{lemma}[equation]{Lemma}
\newtheorem{corollary}[equation]{Corollary}
\newtheorem{proposition}[equation]{Proposition}

\theoremstyle{definition}
\newtheorem{definition}[equation]{Definition}

\theoremstyle{remark}
\newtheorem{remark}[equation]{Remark}

\newtheorem{claim}[equation]{Claim}
\newtheorem{claim*}{Claim}

\newcommand{\bb}[1]{\mathbb{#1}}

\newcommand{\dist}{\operatorname{dist}}

\newcommand{\dv}{\operatorname{div}}
\newcommand{\Di}{\operatorname{D}}

\newcommand{\n}[1]{\mathscr{#1}}
\newcommand{\m}[1]{\mathcal{#1}}
\newcommand{\Lip}{\operatorname{Lip}} 
\newcommand{\ep}{\varepsilon}
\newcommand{\ra}{\rightarrow}
\newcommand{\lra}{\longrightarrow}   
\newcommand{\loc}{\operatorname{loc}}

\newcommand{\wt}{\widetilde}

\DeclareMathOperator{\supp}{supp}
\DeclareMathOperator{\diam}{diam}

\def\Xint#1{\mathchoice
	{\XXint\displaystyle\textstyle{#1}}%
	{\XXint\textstyle\scriptstyle{#1}}%
	{\XXint\scriptstyle\scriptscriptstyle{#1}}%
	{\XXint\scriptscriptstyle\scriptscriptstyle{#1}}%
	\!\int}
\def\XXint#1#2#3{{\setbox0=\hbox{$#1{#2#3}{\int}$}
		\vcenter{\hbox{$#2#3$}}\kern-.5\wd0}}

\def\dashint{\Xint-}

\def\YYint#1#2#3{{\setbox0=\hbox{$#1{#2#3}{\iint}$}
		\vcenter{\hbox{$#2#3$}}\kern-.51\wd0}}
 
\newcommand*{\dt}[1]{%
	\accentset{\mbox{\Large\bfseries .}}{#1}}

\allowdisplaybreaks

\setlist{nosep}

\begin{document}
	
\author[M. Mourgoglou]{Mihalis Mourgoglou}
\email[]{michail.mourgoglou@ehu.eus}
\address{Departamento de Matemáticas, Universidad del País Vasco, UPV/EHU, Barrio Sarriena S/N 48940 Leioa, Spain and, IKERBASQUE, Basque Foundation for Science, Bilbao, Spain}

\author[B. Poggi]{Bruno Poggi}
\email[]{brunopoggi@pitt.edu}
\address{Department of Mathematics, University of Pittsburgh, Pittsburgh, PA, 15213}

\title[Approximation result]{The Dirichlet problem as the boundary of the Poisson problem: A sharp approximation result}
\date{\today}

\thanks{M.M. was supported  by IKERBASQUE and partially supported by the grant PID2024-157724NB-I00: of the Ministerio de Econom\'ia y Competitividad (Spain), and by  IT-1615-22 (Basque Government). Part of this work was carried out while B.P. was supported by the European Research Council (ERC) under the European Union's Horizon 2020 research and innovation programme (grant agreement 101018680).
}
 
\begin{abstract} On a bounded   domain $\Omega\subset\bb R^{n+1}$, $n\geq2$, satisfying the corkscrew condition and with Ahlfors regular boundary, we  characterize the dual space to the space ${\bf N}_{2,p}$ of functions $u$ whose Kenig-Pipher modified non-tangential maximal operator $\m N_2(u)$ lies in $L^p(\partial\Omega)$, $p\in(1,\infty)$. We find that
\[
({\bf N}_{2,p})^*={\bf C}_{2,p'}\oplus L^{p'}(\partial\Omega),\qquad\text{and that}\qquad L^{p'}(\partial\Omega)=\partial^{\operatorname{weak}-*}{\bf C}_{2,p'}\,/\,{\bf C}_{2,p'},
\]
where ${\bf C}_{2,p'}$ is a certain $L^{p'}$-Carleson space and $p'$ is the H\"older conjugate of $p$. This  answers  a question considered by Hyt\"onen and Ros\'en. 

Inspired by this result and the recently understood characterizations of the $L^p$-solvability of the Dirichlet problem  in terms of the Poisson problem by Mourgoglou, Poggi, and Tolsa,  we show a novel approximation result: for an arbitrary elliptic operator $L=-\dv A\nabla$ with a  not necessarily symmetric  matrix $A$ of real bounded measurable coefficients, the solution space to the Dirichlet problem with data in $L^p(\partial\Omega)$
\[
	\left\{\begin{aligned}-\dv A\nabla u&=0,\quad&\text{in }&\Omega,\\u&=g,\quad&\text{on }&\partial\Omega,\end{aligned}\right.
\]
 lies on the weak-$*$ boundary in ${\bf N}_{2,p}$ of the solution space to the Poisson problem 
\[
\left\{\begin{aligned}-\dv A\nabla w&=-\dv F,\qquad&\text{in }&\Omega,\\ w&=0,\qquad&\text{on }&\partial\Omega,\end{aligned}\right. 
\]
with $F\in{\bf C}_{2,p}$, provided that the Dirichlet problem for $L$ with data in $L^p(\partial\Omega)$ is solvable in $\Omega$. This approximation result is sharp and new even for the Laplacian and on the unit ball.
\end{abstract} 

\maketitle

\hypersetup{linkcolor=blue}

	\tableofcontents

\section{Introduction}\label{sec.intro}

The $L^p$-solvability of homogeneous boundary value problems for second-order linear  elliptic partial differential equations   with non-smooth coefficients  on rough domains  has been an area of intense and fruitful research in the last few decades, culminating in shocking equivalences between certain geometric properties of domains   in the Euclidean space  and $L^p$-solvability of the homogeneous Dirichlet problem   for the Laplacian. In the recent article \cite{mpt25}, the authors discovered under no  assumptions on the real elliptic operator\footnote{The assumptions on the strongly elliptic operator in \cite{mpt25} for the stated result are as general as can be without degenerating the ellipticity or considering complex coefficients or systems.} and quite relaxed conditions on the geometry of the domain that the solvability of the Dirichlet problem with data in $L^p$
\begin{equation}\label{eq.dirichlet}
(\operatorname{D}^L_p)\qquad\left\{\begin{aligned}-\dv A\nabla u&=0,\quad&\text{in }&\Omega,\\u&=g,\quad&\text{on }&\partial\Omega,\\ \Vert\m N(u)\Vert_{L^p(\partial\Omega,\sigma)}&\lesssim\Vert g\Vert_{L^p(\partial\Omega,\sigma)},&&\end{aligned}\right.
\end{equation}
is equivalent to solvability of the Poisson-Dirichlet problem
\begin{equation}\label{eq.poisson}
(\operatorname{PD}^L_p)\qquad\left\{\begin{aligned}-\dv A\nabla w&=-\dv F,\qquad&\text{in }&\Omega,\\ w&=0,\qquad&\text{on }&\partial\Omega,\\  \Vert\widetilde{\m N}_2(w)\Vert_{L^p(\partial\Omega,\sigma)}&\lesssim\Vert\n C_2(F)\Vert_{L^p(\partial\Omega,\sigma)},&&\end{aligned}\right. 
\end{equation}
with inhomogeneous data $F$ in the  $L^p$-Carleson space ${\bf C}_{2,p}$. For the definitions of the non-tangential maximal operators $\m N$ and $\widetilde{\m N}_2$, as well as that of the Carleson function $\n C_2$ and the space ${\bf C}_{2,p}$, see Section \ref{sec.ntmax} and Definition \ref{def.spaces}. This quantitative equivalence of the problems (\ref{eq.dirichlet}) and (\ref{eq.poisson}) has garnered interest in the mathematical community and is an ongoing area of research across different settings and boundary value problems \cite{mz25,fl24,bmp25,fyy25}.

However, the  mechanism that binds the  boundary value problems (\ref{eq.dirichlet}) and (\ref{eq.poisson}) together is not well-understood at the granular level of the particular data. More precisely, in \cite{mpt25} it was shown that solvability of (\ref{eq.dirichlet}) for \emph{all} $g\in L^p(\partial\Omega)$ implies solvability of (\ref{eq.poisson}) for \emph{all} $F\in{\bf C}_{2,p}$, and viceversa. But now fix $g\in L^p(\partial\Omega)$, and the solution $u$ to (\ref{eq.dirichlet}) with this particular $g$. How do   solutions to the Poisson-Dirichlet problem (\ref{eq.poisson}) inform properties of this specific solution $u$? The proof of the equivalence  between $(\operatorname{D}^L_p)$ and $(\operatorname{PD}^L_p)$ in \cite{mpt25} used deep results about the harmonic measure,  Green's function, and their well-known relationship to each other and to the problem (\ref{eq.dirichlet}); however, in passing to these  kernels, one stops studying a particular solution $u$ and instead studies global properties of the solution operators to the respective problems.  At the end of this process, a robust quantitative equivalence between the \emph{global} problems $(\operatorname{D}^L_p)$ and $(\operatorname{PD}^L_p)$ is established \cite[Theorem 1.2 and Theorem 1.8]{mpt25}, but it remains unclear what is the relationship between the problems (\ref{eq.dirichlet}) and (\ref{eq.poisson}) at the level of the particular solutions. 


In this manuscript, we seek to gain insight at this deeper  level on the relationship between the problems $(\operatorname{D}^L_p)$ and $(\operatorname{PD}^L_p)$. Our first main result is the following novel approximation theorem. We defer all definitions to Section \ref{sec.prelim}.

\begin{theorem}[Approximation Theorem]\label{thm.approx} Let $\Omega\subset\bb R^{n+1}$, $n\geq2$, be a bounded   domain satisfying the corkscrew condition and with $n$-Ahlfors regular boundary, and let $A$ be a strongly elliptic, not necessarily symmetric real matrix of bounded measurable coefficients in $\Omega$. Fix $p\in(1,\infty)$, and suppose that $(\Di_p^L)$ is solvable in $\Omega$. Fix $g\in L^p(\partial\Omega)$, and let $u\in{\bf N}_{2,p}\cap W^{1,2}_{\loc}(\Omega)$ be the unique  solution to the   Dirichlet problem 
	\begin{equation}\label{eq.dirichlet0}
		\left\{\begin{aligned}-\dv A\nabla u&=0,\qquad&\text{in }&\Omega,\\ u&=g,\qquad&\text{on }&\partial\Omega.\end{aligned}\right. 
	\end{equation}
	Then  there exists a sequence of vector functions $\{F_j\}_{j\in\bb N}\subset\Lip_c(\Omega)^{n+1}$, with the following properties:
	\begin{enumerate}[(i)]
		\item\label{item.c1} There exists $C>0$ depending only on dimension, ellipticity, the corkscrew   constant, and the $n$-Ahlfors regularity constant, such that
		\begin{equation}\label{eq.uniformc}
			\Vert F_j\Vert_{{\bf C}_{2,p}}\leq C\Vert g\Vert_{L^p(\partial\Omega,\sigma)}.
		\end{equation}
		\item\label{item.c2} For each $j\in\bb N$, let $w_j$ be the unique weak solution in $W_0^{1,2}(\Omega)$ to the Poisson problem
		\begin{equation}\label{eq.poissondirichlet0}
			\left\{\begin{aligned}-\dv A\nabla w_j&=-\dv F_j,\qquad&\text{in }&\Omega,\\ w_j&=0,\qquad&\text{on }&\partial\Omega.\end{aligned}\right. 
		\end{equation}
		Then, $w_j\ra u$ as $j\ra\infty$ in the following topologies:
		\begin{enumerate}[(a)]
			\item\label{item.c2a} the strong topology in $C_{\loc}^{\alpha}(\Omega)$ for some $\alpha\in(0,1)$.
			\item\label{item.c2b} the strong topology in $W^{1,2}_{\loc}(\Omega)$.
			\item\label{item.c2c} the weak-$*$ topology in ${\bf N}_{2,p}$. Moreover, this convergence cannot be improved to convergence in the weak topology in ${\bf N}_{2,p}$ unless $g\equiv0$.
		\end{enumerate}
	\end{enumerate}
\end{theorem}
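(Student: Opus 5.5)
The natural route is to produce $F_j$ by cutting off $u$ near the boundary. Fix a Whitney-type family of smooth cutoffs $\eta_j\in C_c^\infty(\Omega)$ with $\eta_j\equiv1$ on $\{x:\delta(x)>2^{-j}\}$, $\supp\eta_j\subset\{\delta(x)>2^{-j-1}\}$, and $|\nabla\eta_j|\lesssim 2^{j}$, where $\delta(x)=\dist(x,\partial\Omega)$. Set $v_j=\eta_j u$, which lies in $W_0^{1,2}(\Omega)$, and compute
\[
-\dv A\nabla v_j=-\dv\big(A(u\nabla\eta_j)\big)-A\nabla u\cdot\nabla\eta_j .
\]
The second term is not in divergence form, so I would first mollify/regularize and then rewrite it. Since $\nabla\eta_j$ is supported in a thin layer $\Sigma_j=\{2^{-j-1}<\delta<2^{-j}\}$, I would replace $\eta_j$ by a cutoff adapted to $u$ in the following way: take $F_j:=A(u\nabla\eta_j)+G_j$, where $G_j$ solves $-\dv G_j=-A\nabla u\cdot\nabla\eta_j$ locally on each Whitney cube $Q$ meeting $\Sigma_j$. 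The compatibility condition $\int_Q A\nabla u\cdot\nabla\eta_j=0$ fails in general, so instead I would use the identity $\int_\Omega A\nabla u\cdot\nabla(\eta_j\varphi)=0$ for test functions and absorb that term by letting $w_j$ be the solution with data $F_j$ and checking directly that $w_j-v_j$ is small. Concretely, $w_j-\eta_j u$ solves $-\dv A\nabla(w_j-\eta_ju)=A\nabla u\cdot\nabla\eta_j$ up to the divergence term, a right-hand side supported in $\Sigma_j$. Finally, I would replace $F_j$ by Lipschitz compactly supported approximants with negligible error in $W^{-1,2}$ and in ${\bf C}_{2,p}$.

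For \eqref{item.c1}, on a Whitney cube $Q\subset\Sigma_j$ we have $|F_j|\lesssim |u|/\delta$. By Caccioppoli the gradient contributions are controlled the same way. The Carleson functional $\n C_2(F_j)(\xi)$ then averages $\big(\delta^{-1}\int_{B(\xi,r)\cap\Sigma_j}|u|^2\delta^{-2}\,dx\big)^{1/2}$-type quantities, with $r\approx 2^{-j}$ essentially. Because $\Sigma_j$ is a single dyadic layer, this is bounded by the $L^2$ average of $u$ over Whitney regions in the cone at $\xi$, i.e.\ by $\m N_2(u)(\xi)$ (with a slightly larger aperture). Hence $\|F_j\|_{{\bf C}_{2,p}}\lesssim\|\m N_2 u\|_{L^p}\lesssim\|g\|_{L^p}$ by $(\Di^L_p)$, using the Ahlfors regularity and corkscrew conditions for the change of aperture.

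For \eqref{item.c2}, the key is to show $w_j-u\to0$ on compacta. Write $w_j=\eta_ju+e_j$, where $e_j\in W^{1,2}_0$ solves an equation with source $A\nabla u\cdot\nabla\eta_j$ (a measure-like term concentrated on $\Sigma_j$). Pairing with the Green function, $e_j(x)=\int_{\Sigma_j}G^*(x,y)\,A\nabla u\cdot\nabla\eta_j\,dy$. Integrating by parts back onto $G^*(x,\cdot)$ and $\eta_j$, together with the fact that $L u=0$, this equals $\int A\nabla u\cdot\nabla_y G^*(x,y)(1-\eta_j)\,dy$-type boundary-layer expressions. These can be recognized as the difference between $u(x)$ and its representation $\int g\,d\omega^x$ truncated at scale $2^{-j}$, and so tend to $0$ by $(\Di^L_p)$ (nontangential convergence of $u$ to $g$ and dominated convergence with $\m N u\in L^p$, $d\omega^x/d\sigma\in L^{p'}$). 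This yields pointwise convergence with local uniform bounds. De Giorgi--Nash--Moser gives local Hölder equicontinuity, so Arzelà--Ascoli upgrades the convergence to \eqref{item.c2a}, and Caccioppoli applied to $w_j-u$ gives \eqref{item.c2b}. For \eqref{item.c2c}, the solvability of $(\operatorname{PD}^L_p)$ (equivalent to $(\Di^L_p)$ by \cite{mpt25}) gives $\sup_j\|w_j\|_{{\bf N}_{2,p}}\lesssim\|g\|_{L^p}$. By the duality theorem ${\bf N}_{2,p}=({\bf C}_{2,p'}\text{-predual})^*$ established in the paper, bounded sets are weak-$*$ sequentially compact, and the local convergence identifies every limit as $u$. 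For the sharpness, the paper's identification $L^{p'}(\partial\Omega)=\partial^{\text{weak}-*}{\bf C}_{2,p'}/{\bf C}_{2,p'}$ gives a functional $\Lambda_h\in({\bf N}_{2,p})^*$ pairing a function with its boundary trace against $h\in L^{p'}$. We have $\Lambda_h(w_j)=0$ since $w_j\in W_0^{1,2}$ has zero trace, whereas $\Lambda_h(u)=\int gh\,d\sigma\neq0$ for a suitable $h$ whenever $g\not\equiv0$.

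I expect the main obstacle to be the representation/estimate of $e_j$: controlling the non-divergence term $A\nabla u\cdot\nabla\eta_j$ both in ${\bf C}_{2,p}$ (it must be rewritten in divergence form, e.g.\ via $\nabla\eta_j=\dv$ of something or by absorbing it into $w_j$) and showing it vanishes in the interior. This is where I would lean most on the Green function/harmonic measure theory for $(\Di^L_p)$.
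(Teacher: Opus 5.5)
Your route differs from the paper's, and its core is sound. Two steps do not hold up as written.

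\textbf{Comparison.} The paper builds $F_j$ from the Varopoulos extension $\Phi$ of $g$ and a cutoff $\eta_j$ equal to $1$ \emph{near} $\partial\Omega$. With $\tilde F_j=-A\nabla(\Phi\eta_j)$ one has exactly $\tilde w_j=u-\Phi\eta_j$, because $u-\Phi\eta_j\in W_0^{1,2}(\Omega)$. Interior convergence is then automatic, and weak-$*$ convergence comes from the conormal identity (\ref{eq.conv4}). The Carleson bound involves only $\n C_\infty(\nabla\Phi)$ and $\m M(\m N\Phi)$, i.e.\ $g$ and the geometry. The price is that $g\mapsto\tilde F_j$ is nonlinear and $\tilde F_j$ reaches $\partial\Omega$, which forces the regularization (\ref{eq.fj}) and the diagonal argument of Part 2.

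Your $G_j$ detour is unnecessary; your actual choice is $F_j=A(u\nabla\eta_j)$ with an interior cutoff. It is linear in $g$, already compactly supported in $\Omega$, and needs no Varopoulos extension. Omitting $\eta_jA\nabla u$ is exactly what separates it from the naive $A\nabla(\eta_ju)$, which fails (i).

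\textbf{First gap: the constant in (i).} The pointwise bound $\n C_2(F_j)(\xi)\lesssim\widetilde{\m N}_2u(\xi)$ is false, since every scale $r\gtrsim2^{-j}$ contributes. The argument behind (\ref{eq.carlesonineq}) gives $\n C_2(F_j)\lesssim\m M(s_j)$, where $s_j(\zeta)$ is the supremum of the Whitney $L^2$-averages of $u$ at layer points within distance $C2^{-j}$ of $\zeta$. More importantly, your last step $\Vert s_j\Vert_{L^p}\lesssim\Vert\widetilde{\m N}_2u\Vert_{L^p}\lesssim\Vert g\Vert_{L^p}$ invokes $(\Di_p^L)$. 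So your $C$ depends on the solvability constant, which the statement excludes.

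Repair: under $(\Di_p^L)$, $u\to g$ non-tangentially $\sigma$-a.e., and $s_j\le\widetilde{\m N}_2u\in L^p$ (with a larger aperture). Dominated convergence then gives $\limsup_j\Vert s_j\Vert_{L^p}\le\Vert g\Vert_{L^p}$. Hence $\limsup_j\Vert F_j\Vert_{{\bf C}_{2,p}}\le C\Vert g\Vert_{L^p}$ with $C$ as in the statement, and you keep only large $j$.

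\textbf{Second gap: interior convergence.} This is the step you identified as the crux, and your argument for it is not supplied. Identifying $e_j(x)$ with a truncated harmonic-measure representation and applying dominated convergence with $d\omega^x/d\sigma\in L^{p'}$ requires dominating the signed layer-flux measures $\nabla\eta_j\cdot A^T\nabla_yG^*(x,\cdot)\,dm$ by $\omega^x$. You do not provide these Green function/harmonic measure comparisons, and they are delicate here, since $\Omega$ need not even be connected.

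They are also unnecessary. For Lipschitz $g$ one has $u\in W^{1,2}(\Omega)$. Take $H\in\Lip_c(\Omega)$ and $v\in W_0^{1,2}(\Omega)$ solving $-\dv A^T\nabla v=H$. Testing the equations for $w_j$, $v$ and $u$, and using $\int_\Omega A\nabla u\cdot\nabla v\,dm=0$, gives
\[
\int_\Omega w_jH\,dm=\int_\Omega F_j\cdot\nabla v\,dm=\int_\Omega u\eta_jH\,dm+\int_\Omega(1-\eta_j)A\nabla u\cdot\nabla v\,dm\lra\int_\Omega uH\,dm,
\]
where the limit follows by dominated convergence.

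Combine this with the uniform ${\bf N}_{2,p}$ bound from $(\operatorname{PD}_p^L)$ and the density of $\Lip_c(\Omega)$ in ${\bf C}_{2,p'}$ to get (c) for Lipschitz $g$. For general $g$ linearity suffices: $\Vert F_j[g]-F_j[g_k]\Vert_{{\bf C}_{2,p}}\lesssim\Vert g-g_k\Vert_{L^p}$ uniformly in $j$, so no diagonal subsequence is needed.

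Finally, $L(w_j-u)=0$ on $\{\delta>2^{-j}\}$, modulo the harmless Lipschitz regularization. So De Giorgi--Nash--Moser, Arzel\`a--Ascoli and Caccioppoli upgrade this to (a) and (b), as you indicate. Your sharpness argument is the same as the paper's.
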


We prove this result in Section \ref{sec.approx}. Several remarks are in order. First, Theorem \ref{thm.approx} is completely new even in the classical setting $L=-\Delta$ and $\Omega=B(0,1)$. Second, certain technical parameters and the boundedness of the domain may be generalized under suitable modifications, see Remark \ref{rm.details}. Third, the sequence of approximating inhomogeneous data $\{F_j\}_j$ is constructed explicitly in terms of $g$ modulo regularizations (see (\ref{eq.fj0}) and (\ref{eq.fj})). 

The method of proof does not rely explicitly on properties of harmonic measure nor the Green's function. A fundamental object in the proof is the \emph{Varopoulos extension} of the boundary data $g$ \cite{var77,var78,hr18,ht21,mz25}. For its construction, we use the  state-of-the-art technology in \cite{mz25} (see Lemma \ref{lm.varopoulos}) which allows us to write Theorem \ref{thm.approx} in such high geometric generality. Since the Varopoulos extension is a nonlinear extension, we are careful with density arguments.

In Theorem \ref{thm.approx} \ref{item.c2}, the solutions $\{w_j\}_j$ to the Poisson problem are stated to converge to $u$ in three different  topologies. Of these, the most interesting one is in \ref{item.c2} \ref{item.c2c}, because convergence in the weak$-*$ topology of ${\bf N}_{2,p}$ is a global (i.e.\ up to the boundary of $\Omega$) convergence result, while the topologies in \ref{item.c2} \ref{item.c2a} and \ref{item.c2b} are fundamentally local in $\Omega$. Indeed, if one wanted to show only the local convergences in \ref{item.c2} \ref{item.c2a} and \ref{item.c2b}, this would be a simple exercise by setting $F_j=\nabla(u\zeta_j)$, where $\zeta_j$ is a cutoff function compactly supported in $\Omega\setminus B(\partial\Omega,1/j)$, and locally regularizing $u$ further if needed. We leave the details of this exercise to the reader. However, if one set up the $F_j$ in this way, then certainly neither \ref{item.c1} nor \ref{item.c2} \ref{item.c2c} would hold in general. 

Thus, the  contribution of Theorem \ref{thm.approx} is that it gives the correct (and heretofore unknown) \emph{global} approximation of solutions to the Dirichlet problem (\ref{eq.dirichlet0}) by solutions to the Poisson problem (\ref{eq.poissondirichlet0}) with trivial boundary data, as detailed in \ref{item.c2} \ref{item.c2c}, and with the appropriate uniform norm control implied by \ref{item.c1}. Theorem \ref{thm.approx} may appear counterintuitive at a first reading, because solutions to the Dirichlet problem are not trivial at the boundary, while solutions to the Poisson problem (\ref{eq.poissondirichlet0}) are always trivial at the boundary. This observation does preclude convergence in stronger topologies, and is essentially the reason that the convergence in \ref{item.c2} \ref{item.c2c} cannot be upgraded even to the weak topology in ${\bf N}_{2,p}$. 

We defer to Section \ref{sec.discuss} below several further remarks on Theorem \ref{thm.approx}.

Of course, to use the weak   and the weak-$*$ topologies of ${\bf N}_{2,p}$ in Theorem \ref{thm.approx} requires an understanding of the dual and predual spaces to ${\bf N}_{2,p}$. This program was initiated by Hyt\"onen and Ros\'en \cite{hr13} when $\Omega$ is the upper-half space.  Motivated by the  works \cite{aa11,ar12} on maximal regularity estimates and boundary value problems for nonsmooth elliptic systems, Hyt\"onen and Ros\'en asked in \cite[p.2]{hr13}  what the dual and predual of ${\bf N}_{2,p}$ are. In \cite{hr13} they showed that for all $p\in(1,\infty)$,
\begin{equation}\label{eq.hr13}\nonumber
{\bf C}_{2,p'}\subsetneq({\bf N}_{2,p})^*,\qquad\text{and}\qquad {\bf N}_{2,p}=({\bf C}_{2,p'})^*.
\end{equation} 
So the predual of ${\bf N}_{2,p}$ was identified, but its dual space was not.   Nevertheless, they proved   important quantitative estimates in \cite[Theorem 3.1]{hr13} (see also Proposition \ref{prop.duality} below) which showed that ${\bf C}_{2,p'}$ cannot miss too much of $({\bf N}_{2,p})^*$; more precisely,  the quotient space $({\bf N}_{2,p})^*/{\bf C}_{2,p'}$ is forced by \cite[Theorem 3.1]{hr13} to be in the   boundary  of ${\bf C}_{2,p'}$ in the weak-$*$ topology of $({\bf N}_{2,p})^*$; that is,
\begin{equation}\label{eq.weak*boundary}
	({\bf N}_{2,p})^*/{\bf C}_{2,p'}=(\partial^{\operatorname{weak}-*}{\bf C}_{2,p'})/{\bf C}_{2,p'}.
\end{equation}
These results were recently generalized to domains with significantly rougher geometry in \cite{mpt25}.

The second main result of this article is that we have managed to characterize the dual space to ${\bf N}_{2,p}$, answering the question posed by Hyt\"onen and Ros\'en in \cite{hr13}. For technical reasons, we use a   stricter definition of the space ${\bf N}_{2,p}$ than in \cite{hr13,mpt25}, whereby we consider the completion of $C(\overline\Omega)$ in the ${\bf N}_{2,p}$ norm; see Definition \ref{def.spaces} and Remark \ref{rm.different}. The use of this more restrictive definition is natural and goes back to the foundational work of Coifman, Meyer and Stein \cite{cms}, but see also Remark \ref{rm.different2}. Our precise result is as follows.

\begin{theorem}[The dual of ${\bf N}_{r,p}$]\label{thm.dual} Let $\Omega\subset\bb R^{n+1}$, $n\geq2$, be a    domain satisfying the corkscrew condition and with $n$-Ahlfors regular boundary, either bounded or with unbounded boundary, and let $r\in[1,\infty)$, $p\in(1,\infty)$, with $r'$,$p'$ the respective H\"older conjugates. Then the identity
	\begin{equation}\label{eq.dual}
		({\bf N}_{r,p})^*={\bf C}_{r',p'}\oplus L^{p'}(\partial\Omega,\sigma) 
	\end{equation}
	holds in the sense that
	\begin{enumerate}[(i)]
		\item For any $F\in{\bf C}_{r',p'}$ and $g\in L^{p'}(\partial\Omega,\sigma)$, the functional $T$ given by
		\[
		T(u):=\int_{\Omega}uF\,dm+\int_{\partial\Omega}ug\,d\sigma,\qquad u\in{\bf N}_{r,p},
		\]
		is a well-defined bounded linear functional on ${\bf N}_{r,p}$.
		\item If  $T\in({\bf N}_{r,p})^*$, then there exist unique $F\in{\bf C}_{r',p'}$ and $g\in L^{p'}(\partial\Omega,\sigma)$ such that 
		\begin{equation}\label{eq.decomp}
			T(u)=\int_{\Omega}uF\,dm+\int_{\partial\Omega}ug\,d\sigma,\qquad\text{for all }u\in{\bf N}_{r,p}.
		\end{equation}
	\end{enumerate}
\end{theorem}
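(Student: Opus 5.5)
We split $T$ into an interior part and a boundary part: the interior part is represented by a Carleson density, and what remains factors through the trace.

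\textbf{Step 1: part (i).} Since ${\bf N}_{r,p}$ is the completion of $C(\overline\Omega)$, it suffices to bound $T$ on $u\in C(\overline\Omega)$ and extend by density. For the interior term, the Hyt\"onen--Ros\'en duality estimate (Proposition \ref{prop.duality}) gives
\[
\Bigl|\int_\Omega uF\,dm\Bigr|\lesssim\|\n C_{r'}(F)\|_{L^{p'}}\|\m N_r(u)\|_{L^p}.
\]
For the boundary term, we need the trace estimate $|u|\le \m N_r(u)$ $\sigma$-a.e. on $\partial\Omega$ for continuous $u$. Each point $\xi$ is the limit of Whitney regions in its cone, and $L^r$ averages over those regions converge to $u(\xi)$ by continuity up to the boundary. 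Then H\"older gives $|\int_{\partial\Omega}ug\,d\sigma|\le\|g\|_{p'}\|\m N_r u\|_p$. As a consequence, the trace map $\operatorname{Tr}:{\bf N}_{r,p}\to L^p(\partial\Omega)$ extends boundedly.

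\textbf{Step 2: existence in part (ii).} Let $T\in({\bf N}_{r,p})^*$. Restrict $T$ to the subspace $X_0$, defined as the closure of $C_c(\Omega)$ in ${\bf N}_{r,p}$. On $C_c(\Omega)$, $T$ is bounded in ${\bf N}_{r,p}$. By the Hyt\"onen--Ros\'en identification of ${\bf C}_{r',p'}$ with the dual of the closure of compactly supported functions (adapted to rough domains in \cite{mpt25}), there is a unique $F\in{\bf C}_{r',p'}$ with $T(u)=\int uF$ on $X_0$. Concretely, on each Whitney cube $T$ is an $L^r$-bounded functional, so $F\in L^{r'}_{\loc}$, and the tent-space duality controls $\n C_{r'}(F)$.

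Next set $S:=T-\int_\Omega\cdot\,F$, which by Step 1 is bounded on ${\bf N}_{r,p}$ and vanishes on $X_0$. The key claim is
\[
\ker\operatorname{Tr}=X_0 .
\]
Granting this, $S$ factors through ${\bf N}_{r,p}/X_0\cong\operatorname{Tr}({\bf N}_{r,p})$. We also need the trace to be onto $L^p(\partial\Omega)$ with an open-mapping bound. For this we use an extension $g\mapsto Eg$, for example a Whitney-averaged extension $\sum_Q (\dashint_{\hat Q}g\,d\sigma)\varphi_Q$, for which $\m N_r(Eg)\lesssim M_\sigma g$. Then $\|S(Eg)\|\lesssim\|g\|_{L^p}$, so by Riesz representation $S(Eg)=\int g\,h\,d\sigma$ with $h\in L^{p'}$. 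Since $u-E(\operatorname{Tr}u)\in X_0$ and $S$ vanishes on $X_0$, we get $S(u)=\int_{\partial\Omega}uh\,d\sigma$ for all $u$.

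\textbf{Step 3: uniqueness.} If $\int uF+\int ug=0$ for all $u$, then testing with $u\in C_c(\Omega)$ gives $F=0$. Testing with $Eg'$ for arbitrary $g'$ then gives $g=0$.

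\textbf{Main obstacle.} The hard step is $\ker\operatorname{Tr}\subset X_0$: every $u\in C(\overline\Omega)$ with $u|_{\partial\Omega}=0$ must be approximable in ${\bf N}_{r,p}$ by compactly supported functions. I would take $u(1-\psi_\delta)$, where $\psi_\delta$ is a cutoff equal to $1$ on $\{\dist(\cdot,\partial\Omega)<\delta\}$. Then $\m N_r(u\psi_\delta)\le\sup_{\dist<2\delta}|u|$ pointwise, which tends to $0$ uniformly by continuity of $u$ and $u=0$ on $\partial\Omega$. Since $\sigma(\partial\Omega)<\infty$, this converges to $0$ in $L^p$; in the unbounded case we additionally truncate at infinity, using decay of $\m N_r$ over far regions.

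For a general element of the completion we argue by density. Take $u_k\in C(\overline\Omega)$ with $u_k\to u$ and $\operatorname{Tr}u=0$, and replace $u_k$ by $u_k-E(u_k|_{\partial\Omega})$. This correction has zero trace and converges to $u$. It need not be continuous up to the boundary, however, so I would take $E$ to map continuous data to $C(\overline\Omega)$, which the Whitney-average construction does.
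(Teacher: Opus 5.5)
Your proof is correct, but it is organized differently from the paper's.

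\textbf{The paper's route.} The paper represents $T|_{C_c(\overline\Omega)}$ by a Radon measure $\mu$ on $\overline\Omega$ (Riesz), reduces to $\mu\geq0$, and analyzes $\mu$ piece by piece:
\begin{enumerate}[(a)]
\item $\mu\ll m$ on compacta of $\Omega$, with density $F\in L^{r'}_{\loc}$.
\item The Carleson bound for $F$, via the lower estimate of Proposition \ref{prop.duality} applied to truncations $F{\bf 1}_{K_k}$, together with Lemma \ref{lm.nc} and Fatou.
\item $\mu|_{\partial\Omega}\ll\sigma$ with $L^{p'}$ density, by testing against Varopoulos extensions of smoothed indicators. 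Here pointwise convergence of $P_{1/k}{\bf 1}_{V_1}$ is essential, since nothing relates $\mu$ and $\sigma$ a priori.
\end{enumerate}

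\textbf{Your route.} You build a topological splitting ${\bf N}_{r,p}=X_0\oplus E(L^p(\partial\Omega,\sigma))$, with $X_0=\overline{C_c(\Omega)}=\ker\operatorname{Tr}$ and $E$ a linear bounded right inverse of the trace. This makes (\ref{eq.dual}) transparent: $X_0^*={\bf C}_{r',p'}$ and $X_0^{\perp}\cong({\bf N}_{r,p}/X_0)^*\cong L^{p'}(\partial\Omega,\sigma)$.

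Your interior step coincides with the paper's. Write it out rather than citing an identification $(X_0)^*={\bf C}_{r',p'}$: local $L^r$ duality via Lemma \ref{lm.local}, then the lower Carleson estimate on $F{\bf 1}_K$ using Lemma \ref{lm.nc}, then monotone convergence in $K$.

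On the boundary your route is genuinely lighter. Because $E$ is linear, $g\mapsto S(Eg)$ is directly a bounded functional on $L^p$, so the density comes from $L^p$--$L^{p'}$ duality in one line. You need no absolute-continuity argument, no positivity reduction, and none of the (nonlinear, much deeper) Varopoulos machinery. Only $\widetilde{\m N}_r(Eg)\lesssim\m M_\sigma g$ and continuity of $Eg$ up to $\partial\Omega$ are used.

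\textbf{Trade-off.} The price is the extra lemma $\ker\operatorname{Tr}\subset X_0$. You handle it correctly: first uniform smallness near $\partial\Omega$ for continuous $u$ vanishing there, then the corrections $u_k-E(u_k|_{\partial\Omega})$ for general $u$. When $\partial\Omega$ is unbounded you still need routine truncation at infinity, since $Eg$ is not compactly supported. The paper's measure-theoretic route avoids identifying $\ker\operatorname{Tr}$ at all. It also directly yields the decomposition of $\mu$ that Remark \ref{rm.different2} reuses for $\hat{\bf N}_{r,p}$.
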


See also Remark \ref{rm.hope} about the case $r=\infty$. Theorem \ref{thm.dual} is shown in Section \ref{sec.dual}. By Theorem \ref{thm.dual}, and (\ref{eq.weak*boundary}), we immediately conclude that
\begin{equation}\label{eq.weak*boundary2}
L^{p'}(\partial\Omega,\sigma)=(\partial^{\operatorname{weak}-*}{\bf C}_{r',p'})/{\bf C}_{r',p'}.
\end{equation}
The identity (\ref{eq.weak*boundary2}) is heavily related to the deep non-linear extension and trace results that Hyt\"onen and Ros\'en obtained in  \cite{hr18}. 

At this point we are ready to reveal a structural connection between Theorem \ref{thm.dual} and Theorem \ref{thm.approx}. The observant reader may have noticed that in the characterization (\ref{eq.dual}) of the dual of ${\bf N}_{2,p}$, the two subspaces ${\bf C}_{2,p'}$ and $L^{p'}(\partial\Omega,\sigma)$ correspond exactly to the spaces for the data in $(\operatorname{PD}_{p'}^L)$ and $(\operatorname{D}_{p'}^L)$ respectively. Furthermore, by (\ref{eq.weak*boundary2}), we know that the latter is the weak-$*$ boundary of the former. Does this identified structure in (\ref{eq.dual}), (\ref{eq.weak*boundary2}) ``lift'' to the level of  the solutions to the boundary value problems $(\operatorname{PD}_{p'}^L)$ and $(\operatorname{D}_{p'}^L)$? 

The answer is at least partially yes, and indeed, we first sought to answer the question of Hyt\"onen and Ros\'en about the dual of ${\bf N}_{2,p}$, and obtained Theorem \ref{thm.dual}. Having obtained (\ref{eq.dual}) and observed (\ref{eq.weak*boundary2}) as  its consequence, it is exactly the remarks and the question raised above that inspired us to seek and prove the approximation result, Theorem \ref{thm.approx}, which we now reinterpret as follows.

\begin{corollary}[$(\Di_p^L)$ lives on the boundary of $(\operatorname{PD}_p^L)$]\label{cor.meta} Retain the setting of Theorem \ref{thm.approx}. Let $\n D_p$ be the space of all solutions to the problem $(\Di_p^L)$ in (\ref{eq.dirichlet}), and let $\n P_p$ be the space of all solutions to the problem $(\operatorname{PD}_p^L)$ in (\ref{eq.poisson}). Then $\n D_p$ lies on the boundary of $\n P_p$ in the weak-$*$ topology of ${\bf N}_{2,p}$. That is,
\begin{equation}\label{eq.meta}
	\n D_p\subseteq(\partial^{weak-*}\n P_p)\,/\,\n P_p,
\end{equation}
as subspaces of ${\bf N}_{2,p}$.
\end{corollary}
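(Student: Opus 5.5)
The corollary is a reinterpretation of Theorem \ref{thm.approx}, so the plan is to read (\ref{eq.meta}) as two inclusions and check each. First, every $u\in\n D_p$ lies in the weak-$*$ closure of $\n P_p$ in ${\bf N}_{2,p}$. Second, no nonzero $u\in\n D_p$ lies in $\n P_p$ itself. Both closures and memberships are taken in ${\bf N}_{2,p}$, which carries a weak-$*$ topology because ${\bf N}_{2,p}=({\bf C}_{2,p'})^*$ by the Hyt\"onen--Ros\'en duality (Proposition \ref{prop.duality} and its extension in \cite{mpt25}). Hence $\partial^{\operatorname{weak}-*}\n P_p$ means the weak-$*$ closure of $\n P_p$ with $\n P_p$ removed, up to the trivial element $0$, which lies in both spaces. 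This is how I will interpret the quotient notation.

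\emph{Step 1 (membership in $\n P_p$).} Fix $u\in\n D_p$ with boundary datum $g\in L^p(\partial\Omega)$, and apply Theorem \ref{thm.approx} to obtain $F_j\in\Lip_c(\Omega)^{n+1}$ and the solutions $w_j\in W^{1,2}_0(\Omega)$. Since $F_j\in{\bf C}_{2,p}$ with uniform bound (\ref{eq.uniformc}), and $(\Di_p^L)$ is solvable, the equivalence of $(\Di^L_p)$ and $(\operatorname{PD}^L_p)$ from \cite{mpt25} (Theorem 1.2/1.8 there) gives $\Vert\widetilde{\m N}_2(w_j)\Vert_{L^p}\lesssim\Vert F_j\Vert_{{\bf C}_{2,p}}$. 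So each $w_j$ is a solution of $(\operatorname{PD}^L_p)$, that is, $w_j\in\n P_p$. One should also verify that $w_j\in{\bf N}_{2,p}$ under the stricter completion definition (Definition \ref{def.spaces}). This holds because $w_j$ is H\"older continuous up to the boundary with zero trace: $F_j$ is compactly supported, and on corkscrew domains with Ahlfors regular boundary (hence CDC) solutions are H\"older continuous up to the boundary. Thus $w_j\in C(\overline\Omega)$.

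\emph{Step 2 (weak-$*$ closure).} By Theorem \ref{thm.approx}\ref{item.c2}\ref{item.c2c}, $w_j\to u$ weak-$*$ in ${\bf N}_{2,p}$, so $u$ lies in the weak-$*$ (sequential, hence topological) closure of $\n P_p$.

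\emph{Step 3 (exclusion from $\n P_p$).} I expect this to be the only step needing a genuine argument, and it is where the decomposition (\ref{eq.dual}) enters. Suppose $u\in\n P_p$ with $u\neq0$. Then $u$ has vanishing boundary values in the ${\bf N}_{2,p}$ sense: the boundary pairing $\int_{\partial\Omega}u\,h\,d\sigma$ is zero for all $h\in L^{p'}$. This is because elements of $\n P_p$ are limits in ${\bf N}_{2,p}$ norm of functions vanishing on $\partial\Omega$, or directly because solutions of $(\operatorname{PD}^L_p)$ have zero non-tangential trace. On the other hand, $u\in\n D_p$ has non-tangential trace $g$. Pairing with the functional $T(v)=\int_{\partial\Omega}v\,h\,d\sigma$ from Theorem \ref{thm.dual} and choosing $h=|g|^{p-2}\bar g$ gives $T(u)=\Vert g\Vert_p^p$. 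Hence $g=0$, and then $u\equiv0$ by uniqueness in $(\Di^L_p)$.

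The delicate point in Step 3 is justifying that the trace functional in Theorem \ref{thm.dual} is continuous in the ${\bf N}_{2,p}$ norm and vanishes on $\n P_p$. If it is only clear on $C(\overline\Omega)$, I would extend it by density using the definition of ${\bf N}_{2,p}$ as a completion. This is also precisely why the approximation in Step 2 cannot be weak or in norm (Theorem \ref{thm.approx}\ref{item.c2c}): the functional $T$ separates $u$ from every $w_j$. Combining Steps 1--3 yields (\ref{eq.meta}).
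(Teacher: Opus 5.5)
Your proposal is correct and follows essentially the route the paper intends. The paper gives no separate proof and presents the corollary as a rereading of Theorem \ref{thm.approx}\ref{item.c2}\ref{item.c2c}. The weak-$*$ convergence $w_j\to u$ gives membership in the weak-$*$ closure of $\n P_p$. The trace functionals $L^{p'}(\partial\Omega,\sigma)\subset({\bf N}_{2,p})^*$ from Theorem \ref{thm.dual} (via Lemma \ref{lm.trace}) give $\n D_p\cap\n P_p=\{0\}$, exactly as in the paper's argument that the convergence cannot be upgraded to weak convergence unless $g\equiv0$. The one point left implicit, as in the paper, is that the ${\bf N}_{2,p}$-trace of $u$ coincides with $g$. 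This follows by approximating $g$ with Lipschitz data and using the $(\Di_p^L)$ estimate.
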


Note the structural similarity between (\ref{eq.meta}) and (\ref{eq.weak*boundary2}). It is in the precise sense of Corollary \ref{cor.meta} that one can thus identify the Dirichlet problem with $L^p$ data as being a subset of ``the boundary'' of the Poisson-Dirichlet problem with inhomogeneous data in ${\bf C}_{2,p}$.

\subsection{Further remarks}\label{sec.discuss} 

It is natural to wonder whether an analogue of Theorem \ref{thm.approx} holds for   the classical Dirichlet problem (\ref{eq.dirichlet0}) with continuous boundary data, but this is not feasible. Indeed, in this case the uniform estimate which corresponds to the non-tangential maximal function estimate in (\ref{eq.dirichlet}) is   the classical  maximum principle,
\[
\Vert u\Vert_{C(\overline\Omega)}\lesssim\Vert g\Vert_{C(\partial\Omega)}.
\]
First, note that there is not even a well-defined weak-$*$ topology on $C(\overline\Omega)$  since this space does not admit a predual, because $\overline\Omega$ is an infinite compact metric space \cite[Chapter 6]{ddls16}. But the issue is not merely definitional: the uniform norm is too strong to control the natural candidates for  sequences of inhomogeneous data $\{F_j\}_j$ for the Poisson problem \ref{eq.poissondirichlet0}.  Thus it is not even clear that an analogue of the quantitative estimate (\ref{eq.uniformc}) is possible in this case.


A discussion about the approximation of solutions to the Dirichlet problem with $L^p$ data (\ref{eq.dirichlet}) would be remiss without a comparison to the relatively well-understood theory of \emph{$\ep$-approximators} \cite{KKoPT,HKMP,HMM2,gmt18,HT1,bptt24}. The $\ep$-approximators were often used to construct the Varopoulos extension of boundary data in $L^p(\partial\Omega)$, although recent techniques \cite{hr18,mz25} show that the existence of $\ep$-approximators to solutions of (\ref{eq.dirichlet}) is not required to construct Varopoulos extensions. Given $g\in L^p(\partial\Omega)$ and $u$ the solution to (\ref{eq.dirichlet}), there is a sequence $\{u_j\}$ of $\ep$-approximators which converge to $u$ in the \emph{strong} topology of ${\bf N}_{2,p}$, and $\nabla u_j\in{\bf C}_{2,p}$ for each $j\in\bb N$. However, in contrast to the sequence $\{w_j\}_j$ in Theorem \ref{thm.approx}, the $\ep$-approximators do not solve the Poisson-Dirichlet problem (\ref{eq.poisson}), as they are typically designed to stay close to $g$ on $\partial\Omega$, and thus far from trivial on $\partial\Omega$. Moreover, the sequence $\{\nabla u_j\}_j$ is not   in general bounded in ${\bf C}_{r,p}$ for any $r\in(1,\infty]$, whence it is not clear that there is a corresponding sequence of inhomogeneous data $\{F_j\}_j$ with $-\dv A\nabla u_j=-\dv F_j$ in $\Omega$ and such that the uniform estimate (\ref{eq.uniformc}) holds. We conclude that the approximators $\{w_j\}_j$ from Theorem \ref{thm.approx} are completely different from $\ep$-approximators in both qualitative and quantitative properties.

Finally, we emphasize that the principal challenge of this work lies in identifying the correct statements of our main results, Theorems \ref{thm.approx} and \ref{thm.dual}.   Once these formulations are in place, the overall strategy becomes clearer, although the proofs themselves remain technically demanding. 

We defer all further historical remarks to the survey in \cite{mpt25}.

\subsection{Outline} The rest of this manuscript is organized as follows. In Section \ref{sec.prelim} we introduce all relevant definitions and state useful technical lemmas. In Section \ref{sec.dual} we prove Theorem \ref{thm.dual}, and in Section \ref{sec.approx} we prove Theorem \ref{thm.approx}.

\section{Preliminaries}\label{sec.prelim}

If $x\in\bb R^{n+1}$ and $r>0$, by $B(x,r)$ we denote the Euclidean \emph{open} ball of radius $r$ centered at $x$. If $E\subset\bb R^{n+1}$ and $r>0$, we set $B(E,r):=\{x\in\bb R^{n+1}:\dist(x,E)<r\}$, and ${\bf 1}_E$ is the indicator function of $E$. We  write $a\lesssim b$ if there exists a constant $C>0$ so that $a\leq Cb$ and $a\lesssim_tb$ if $C$ depends on the parameter $t$. We write $a\approx b$ to mean $a\lesssim b\lesssim a$ and define $a\approx_tb$ similarly. Throughout, the letter $C$ denotes a constant that does not depend on the salient parameters in a given proof, and the value of $C$ might change from one line to the next.

We denote by $m=m_{d}$ the Lebesgue measure on $\bb R^{d}$, $d\in\bb N$. 

A set $E\subset\bb R^{n+1}$ is called \emph{$n$-Ahlfors regular} if there exists some constant $C_0>0$ such that
\[
C_0^{-1}r^n\leq\m H^n(B(x,r)\cap E)\leq C_0r^n,\qquad\text{for all }x\in E\text{ and all }0<r\leq\diam(E),
\]
where $\m H^n$ is the $n$-dimensional Hausdorff measure, which we assume to be normalized so that it coincides with $m_n$ in $\bb R^n$.

By a domain we mean an open set. If $\Omega$ is a domain in $\bb R^{n+1}$, we denote by $\sigma=\m H^n|_{\partial\Omega}$ the $n$-dimensional Hausdorff measure restricted to $\partial\Omega$. 

We say that a domain $\Omega$ satisfies the \emph{corkscrew condition} if there exists   $c>0$ such that for each $x\in\partial\Omega$ and every $r\in(0,2\diam\Omega)$, there exists a ball $B\subset B(x,r)\cap\Omega$ so that $r(B)\geq cr$.

Throughout the rest of this article, $\Omega$ denotes a domain in $\bb R^{n+1}$ satisfying the corkscrew condition and with $n$-Ahlfors regular boundary. 

Recall that $C_c^{\infty}(\Omega)$ is the space of compactly supported smooth functions in $\Omega$,  and  that for $p\in[1,\infty)$, $W^{1,p}(\Omega)$ is the Sobolev space of $p$-th integrable functions in $\Omega$ whose weak derivatives exist in $\Omega$ and are $p$-th integrable functions, while $W_0^{1,p}(\Omega)$ is the completion of $C_c^{\infty}(\Omega)$ under the norm $\Vert u\Vert_{W^{1,p}(\Omega)}:=\Vert u\Vert_{L^p(\Omega)}+\Vert\nabla u\Vert_{L^p(\Omega)}$. Moreover,   $\dt W^{1,p}(\Omega)$ consists of the $L^1_{\loc}(\Omega)$ functions whose weak gradient is $p$-th integrable over $\Omega$.


If $X$ is a metric space, by $\Lip_c(X)$ we denote the space of Lipschitz functions compactly supported in $X$.


\subsection{Non-tangential maximal functions and Carleson functions}\label{sec.ntmax}

For $\alpha>0$   and $\xi\in\partial\Omega$, we define the \emph{cone with vertex $\xi$ and aperture} $\alpha>0$ by
\begin{equation}\label{eq.cone}\nonumber
	\gamma_\alpha(\xi):=\{x\in\Omega:|x-\xi|<(1+\alpha)\delta(x)\}.
\end{equation}
Define the \emph{non-tangential maximal function} of  $u\in L^{\infty}_{\loc}(\Omega)$ by 
\begin{equation}\label{eq.ntmax}
\m N_\alpha(u)(\xi):=\sup_{x\in\gamma_\alpha(\xi)}|u(x)|,\quad\text{for }\xi\in\partial\Omega.
\end{equation}

Following \cite{kp}, we introduce the \emph{modified non-tangential maximal function} $\widetilde{\m N}_{\alpha,\hat c, r}$ for a given aperture $\alpha>0$, a parameter $\hat c\in(0,1/2]$, and $r\geq1$: for any $u\in L^r_{\loc}(\Omega)$, we write
\begin{equation}\label{eq.mnt}
	\widetilde{\m N}_{\alpha,\hat c,r}(u)(\xi):=\sup_{x\in\gamma_\alpha(\xi)}\Big(\dashint_{B(x,\hat c\delta(x))}|u(y)|^r\,dm(y)\Big)^{1/r},\qquad\xi\in\partial\Omega.
\end{equation}
For $p\in(1,\infty)$, the $L^p$ norms of non-tangential maximal functions are equivalent under changes of   $\alpha$ or the averaging parameter $\hat c$. For ease of notation, we will often write   $\widetilde{\m N}_{\alpha,r}=\widetilde{\m N}_{\alpha,\hat c,r}$ if we do not wish to specify $\hat c$. When we do not need to specify neither $\alpha$ nor $\hat c$,  we will write ${\m N}=\m N_\alpha$, $\gamma=\gamma_\alpha$, and $\widetilde{\m N}_r=\widetilde{\m N}_{\alpha,\hat c,r}$. 

Let $q\in[1,\infty]$ and $\hat c\in(0,1/2]$.   Define the \emph{$q$-Carleson functional} of a   function $H:\Omega\ra\bb R, H\in L_{\loc}^{q}(\Omega)$ by
\begin{equation}\label{eq.carlesonfn}
	\n C_{\hat c,q}(H)(\xi):=\sup_{r>0}\frac1{r^n}\int_{B(\xi,r)\cap\Omega}\Big(\dashint_{B(x,\hat c\delta(x))}|H|^{q}\Big)^{1/q}\,dm(x),\qquad \xi\in\partial\Omega.
\end{equation} 
The case $q=\infty$ is defined with the appropriate analogue. For $p\in(1,\infty)$, the $L^p$ norms of the Carleson functionals $\n C_{\hat c,q}$ defined in (\ref{eq.carlesonfn}) are equivalent under a change of the averaging parameter $\hat c$, and thus we    write $\n C_q=\n C_{\hat c,q}$ if we do not need to specify $\hat c$. 

\begin{definition}[The spaces ${\bf N}_{r,p}$ and ${\bf C}_{r,p}$]\label{def.spaces} Fix a   domain $\Omega$ satisfying the corkscrew condition and with $n$-Ahlfors regular boundary. By $C_c(\overline{\Omega})$ we denote the space of continuous functions on $\overline\Omega$ (the closure of $\Omega$) with compact support in $\overline\Omega$. Fix $\alpha_0>0$, $\hat c_0\in(0,\frac12]$, and write $\wt{\m N}_r:=\wt{\m N}_{\alpha_0,\hat c_0,r}$, and $\n C_r:=\n C_{\hat c_0,r}$. Note that the functionals
\[
\Vert u\Vert_{{\bf N}_{r,p}}:=\Vert\wt{\m N}_r(u)\Vert_{L^p(\partial\Omega,\sigma)} ,\qquad\qquad\Vert u\Vert_{{\bf C}_{r,p}}:=\Vert\n C_r(u)\Vert_{L^p(\partial\Omega,\sigma)}
\]
define norms on $C_c(\overline{\Omega})$ and on $C_c(\Omega)$, respectively.  Let ${\bf N}_{r,p}$ be the completion of $C_c(\overline\Omega)$ under the norm $\Vert\cdot\Vert_{{\bf N}_{r,p}}$, and let ${\bf C}_{r,p}$ be the completion of $C_c(\Omega)$ under the norm   $\Vert\cdot\Vert_{{\bf C}_{r,p}}$. By \cite[Lemma 2.5]{mpt25}, if $r,p\in(1,\infty)$, we may also identify ${\bf C}_{r,p}$ as the Banach space of functions $u\in L^r_{\loc}(\Omega)$ such that $\Vert\n C_r(u)\Vert_{L^p(\partial\Omega)}<+\infty$. 
\end{definition}

The following proposition establishes a certain quantitative quasi-duality between the spaces ${\bf N}_{r,p}$ and ${\bf C}_{r',p'}$.   When $\Omega$ is the half-space, these estimates were shown by Hyt\"onen and Ros\'en in \cite[Theorem 3.1, Theorem 3.2]{hr13}. The general case cited here was shown in   \cite{mpt25}.

\begin{proposition}\label{prop.duality} Let $\Omega\subset\bb R^{n+1}$, $n\geq1$, be a domain satisfying the corkscrew condition and such that $\partial\Omega$ is $n$-Ahlfors regular. Suppose that either $\Omega$ is bounded, or that $\partial\Omega$ is unbounded. Let $p,q\in(1,\infty)$ and $p'$, $q'$ their H\"older conjugates. Then ${\bf N}_{q,p}=({\bf C}_{q',p'})^*$, and moreover,
	\begin{equation}\label{eq.hr7}\nonumber
		\Vert uH\Vert_{L^1(\Omega)}\lesssim\Vert\wt{\m N}_q(u)\Vert_{L^p(\partial\Omega)}\Vert\n C_{q'}H\Vert_{L^{p'}(\partial\Omega)},\qquad u\in L^q_{\loc}(\Omega), H\in L^{q'}_{\loc}(\Omega),
	\end{equation}
	\begin{equation}\label{eq.hr8}\nonumber
		\Vert\widetilde{\m N}_q(u)\Vert_{L^p(\partial\Omega)}\lesssim\sup_{H:\Vert\n C_{q'}(H)\Vert_{L^{p'}(\partial\Omega)}=1}\Big|\int_{\Omega}Hu\,dm\Big|,\qquad u\in L^q_{\loc}(\Omega),
	\end{equation}
	\begin{equation}\label{eq.hr9}\nonumber
		\Vert\n C_{q'}H\Vert_{L^{p'}(\partial\Omega)}\lesssim\sup_{u:\Vert\wt{\m N}_qu\Vert_{L^p(\partial\Omega)}=1}\Big|\int_\Omega Hu\,dm\Big|,\qquad H\in L^{q'}_{\loc}(\Omega).
	\end{equation}
\end{proposition}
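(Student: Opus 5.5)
The plan is to reduce all three estimates to a Coifman--Meyer--Stein tent-space argument applied to averaged versions of $u$ and $H$, and then obtain the duality ${\bf N}_{q,p}=({\bf C}_{q',p'})^*$ from the two norming inequalities together with a Riesz-representation step.

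\textbf{Step 1: reduce to averaged functions.} Fix a Whitney-type decomposition of $\Omega$ adapted to $\hat c$. For $u\in L^q_{\loc}(\Omega)$ and $H\in L^{q'}_{\loc}(\Omega)$ set
\[
F(x):=\Big(\dashint_{B(x,\hat c\delta(x))}|u|^q\Big)^{1/q},\qquad G(x):=\Big(\dashint_{B(x,\hat c\delta(x))}|H|^{q'}\Big)^{1/q'}.
\]
By Fubini and the comparability of $\delta$ on these balls, $\int_\Omega|uH|\,dm\lesssim\int_\Omega \dashint_{B(x,\hat c\delta(x))}|u||H|\,dm(x)$. Hölder on each ball then gives $\int_\Omega|uH|\,dm\lesssim\int_\Omega FG\,dm$.

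\textbf{Step 2: the first inequality.} Write $\int_\Omega FG=\int_0^\infty\int_{\{F>\lambda\}}G\,dm\,d\lambda$. The set $\{F>\lambda\}$ lies in the tent $T(O_\lambda)$ over the open set $O_\lambda=\{\wt{\m N}_q u>\lambda\}\subset\partial\Omega$; for the corkscrew domain one takes $T(O)=\{x\in\Omega:\dist(x,\partial\Omega\setminus O)\ge c\,\delta(x)\}$. Cover $O_\lambda$ by a Whitney family of surface balls $\Delta_i$, using Ahlfors regularity. Then $T(O_\lambda)\subset\bigcup_i B(\xi_i,Cr_i)\cap\Omega$, and for every $\xi\in\Delta_i$,
\[
\int_{B(\xi_i,Cr_i)\cap\Omega}G\lesssim r_i^n\,\n C_{q'}(H)(\xi).
\]
Averaging over $\Delta_i$ and summing gives $\int_{T(O_\lambda)}G\lesssim\int_{O_\lambda}\n C_{q'}H\,d\sigma$. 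Integrating in $\lambda$ yields $\int FG\lesssim\int_{\partial\Omega}\wt{\m N}_q(u)\,\n C_{q'}(H)\,d\sigma$, and Hölder in $L^p\times L^{p'}$ gives the first inequality. The bounded/unbounded-boundary hypothesis is used here, so that tents over all of $\partial\Omega$ exhaust $\Omega$.

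\textbf{Step 3: the two norming inequalities.} Both are proved by linearization. For the second inequality, truncate $u$ so that everything is finite. For each $\xi$ pick a Whitney cube $Q(\xi)$ meeting the cone $\gamma(\xi)$ with $F\gtrsim\wt{\m N}_q u(\xi)/2$ on $Q(\xi)$; this is done measurably by discretizing over the countable Whitney family. Let $E_Q=\{\xi:Q(\xi)=Q\}$ and define
\[
H:=\sum_Q a_Q\,\frac{|u|^{q-1}\operatorname{sgn}u}{\ell(Q)^{n+1}\|u\|_{L^q(Q^*)}^{q-1}}\,{\bf 1}_{Q^*},\qquad a_Q:=\int_{E_Q}\wt{\m N}_q(u)^{p-1}\,d\sigma.
\]
Then $\int Hu\gtrsim\|\wt{\m N}_q u\|_p^p$. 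On the other hand, the Carleson function satisfies $\n C_{q'}H(\xi)\lesssim M_\sigma(\wt{\m N}_q u^{p-1})(\xi)$, because the $E_Q$ with $Q\subset B(\xi,r)$ are contained in $\Delta(\xi,Cr)$ and are essentially disjoint. By the $L^{p'}$ boundedness of $M_\sigma$ we get $\|\n C_{q'}H\|_{p'}\lesssim\|\wt{\m N}_q u\|_p^{p-1}$, and normalizing gives the second inequality.

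The third inequality is the dual construction and is the main obstacle. I would try the following. For $\xi\in\partial\Omega$ choose a near-optimal radius $r(\xi)$ in the definition of $\n C_{q'}H(\xi)$. Set $w:=\n C_{q'}(H)^{p'-1}$, and define
\[
u(x):=\frac{|H|^{q'-1}\operatorname{sgn}H}{G(x)^{q'-1}}\,\phi(x),\qquad \phi(x):=\sup\big\{\textstyle\dashint_{\Delta(\xi,r)}w\,d\sigma:\ x\in B(\xi,r),\ \xi\in\partial\Omega,\ r\simeq\delta(x)\text{ or larger}\big\}.
\]
This $\phi$ is essentially a non-tangential extension of the maximal function of $w$, so $\wt{\m N}_q u\lesssim M_\sigma w$, which lies in $L^p$. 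For the lower bound, we have $\int Hu\ge\int G\phi$. Using a good-$\lambda$/stopping argument over the maximal near-optimal boxes, one should obtain $\int G\phi\gtrsim\int w\,\n C_{q'}H=\|\n C_{q'}H\|_{p'}^{p'}$. Controlling this stopping-time lower bound is the delicate point; if it fails directly, I would instead pass through the tent-space factorization $G\in T^{p'}_1$, i.e.\ an atomic decomposition of the Carleson function.

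\textbf{Step 4: the duality ${\bf N}_{q,p}=({\bf C}_{q',p'})^*$.} By Step 2, each $u\in{\bf N}_{q,p}$ defines a bounded functional on ${\bf C}_{q',p'}$. By the second inequality, whose test functions $H$ may be taken in $C_c(\Omega)$ after mollification, this embedding is isomorphic onto its range. For surjectivity, take $\Lambda\in({\bf C}_{q',p'})^*$. Its restriction to $L^{q'}(K)$, for each compact $K\subset\Omega$, is bounded, since $\|\n C_{q'}H\|_{p'}\lesssim_K\|H\|_{L^{q'}}$. By Riesz representation these restrictions are given by a consistent $u\in L^q_{\loc}(\Omega)$. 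The second inequality then bounds $\|\wt{\m N}_q u\|_p\lesssim\|\Lambda\|$, and density of $C_c(\Omega)$ in ${\bf C}_{q',p'}$ identifies $\Lambda$ with integration against $u$.
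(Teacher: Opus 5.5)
The paper gives no proof of this proposition. It is quoted from \cite{hr13} (half-space) and \cite{mpt25} (general case), so your argument has to stand on its own. Steps 1--2 and your linearization for the second inequality are the standard Coifman--Meyer--Stein-type arguments and are sound, up to a normalization slip noted below.

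The genuine gap is in the third inequality. Everything rests on the lower bound $\int_\Omega G\phi\,dm\gtrsim\int_{\partial\Omega}w\,\n C_{q'}H\,d\sigma$, which you only assert. It does not follow from near-optimal radii plus Fubini. If you write $\n C_{q'}H(\xi)\le 2r(\xi)^{-n}\int_{B(\xi,r(\xi))\cap\Omega}G\,dm$ and exchange integrals, you are left with the kernel $\int_{\{\xi:\,|x-\xi|<r(\xi)\}}w(\xi)\,r(\xi)^{-n}\,d\sigma(\xi)$. This splits over the dyadic scales $2^k\gtrsim\delta(x)$ into averages of $w$ on surface balls $\Delta(\cdot,2^k)=B(\cdot,2^k)\cap\partial\Omega$. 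Each average is $\lesssim\phi(x)$, but there are about $\log(\diam\partial\Omega/\delta(x))$ of them.

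The missing idea is to use that $w=(\n C_{q'}H)^{p'-1}$ is itself large wherever a Carleson box is heavy. Here is one way to do it.
\begin{itemize}
\item For $E_k:=\{\n C_{q'}H>2^k\}$, select by Vitali disjoint balls $B_{k,i}=B(\xi_{k,i},r_{k,i})$ with $\xi_{k,i}\in\partial\Omega$, $\int_{B_{k,i}\cap\Omega}G\,dm>2^kr_{k,i}^n$, and $E_k\subset\bigcup_i5B_{k,i}$. The radii are bounded once $\n C_{q'}H\in L^{p'}$, which you may assume after truncating $H$.
\item If $\eta\in\Delta(\xi_{k,i},r_{k,i})$, then $B_{k,i}\subset B(\eta,2r_{k,i})$, so $\n C_{q'}H(\eta)>2^{k-n}$. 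Hence $\phi>2^{(k-n)(p'-1)}$ on $B_{k,i}\cap\Omega$.
\item Therefore
\[
\Vert\n C_{q'}H\Vert_{L^{p'}(\sigma)}^{p'}\lesssim\sum_k2^{kp'}\sigma(E_k)\lesssim\sum_k2^{k(p'-1)}\sum_i\int_{B_{k,i}\cap\Omega}G\,dm\le\int_\Omega G(x)\sum_{k:\,2^{(k-n)(p'-1)}<\phi(x)}2^{k(p'-1)}\,dm(x)\lesssim\int_\Omega G\phi\,dm,
\]
where the geometric series absorbs the overlap across levels.
\end{itemize}

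Alternatively, you can avoid stopping times altogether. Set $\m AH(\xi):=\int_{\gamma(\xi)}G\,\delta^{-n}\,dm$. Fubini and Ahlfors regularity give $\n C_{q'}H\lesssim M_\sigma(\m AH)$ pointwise, so
\[
\Vert\n C_{q'}H\Vert_{p'}\lesssim\Vert\m AH\Vert_{p'}=\sup_{\psi\ge0,\,\Vert\psi\Vert_p=1}\int_\Omega G\Psi\,dm,\qquad \Psi(x):=\delta(x)^{-n}\int_{\{\xi:\,x\in\gamma(\xi)\}}\psi\,d\sigma .
\]
The supremum of $\Psi$ over $\gamma(\xi)$ is $\lesssim M_\sigma\psi(\xi)$, so you can build $u$ from $\Psi$ exactly as you built it from $\phi$. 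No atomic decomposition is needed.

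Smaller points:
\begin{itemize}
\item \emph{Normalization in the second inequality.} The factor must be $|Q^*|^{1/q}\approx\ell(Q)^{(n+1)/q}$, not $\ell(Q)^{n+1}$. With the correct factor, the $Q$-th summand $H_Q$ satisfies $\int_{Q^*}H_Qu\,dm=a_Q(\dashint_{Q^*}|u|^q)^{1/q}$ and $(\dashint_{Q^*}|H_Q|^{q'})^{1/q'}=a_Q/|Q^*|$, which is exactly what your two estimates use. Your factor instead puts a non-uniform weight $\ell(Q)^{-(n+1)/q'}$ on each term.
\item \emph{Role of the bounded/unbounded hypothesis in Step 2.} It is not about exhaustion, since the tent over $\partial\Omega$ is always $\Omega$. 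It is needed for the case $O_\lambda=\partial\Omega$, where no Whitney covering exists. For bounded $\Omega$, use $\int_\Omega G\,dm\lesssim(\diam\Omega)^n\n C_{q'}H(\xi)$ for every $\xi$. For unbounded $\partial\Omega$ the case cannot occur, since $\sigma(O_\lambda)<\infty=\sigma(\partial\Omega)$.
\item \emph{Which space Step 4 identifies.} Step 4 produces $u\in L^q_{\loc}(\Omega)$ with $\wt{\m N}_qu\in L^p$, i.e.\ an element of $\hat{\bf N}_{q,p}$ from Remark \ref{rm.different}, not necessarily of the completion of $C_c(\overline\Omega)$. That is the correct reading of ${\bf N}_{q,p}=({\bf C}_{q',p'})^*$; it is the space used in \cite{hr13,mpt25}. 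Indeed, by your first inequality every element of $\hat{\bf N}_{q,p}\supsetneq{\bf N}_{q,p}$ acts boundedly on ${\bf C}_{q',p'}$.
\item \emph{Test functions in Step 4.} When you invoke the second inequality there, the supremum must run over compactly supported $H$, since only for those is $\Lambda(H)=\int Hu\,dm$ known. Your construction applied to truncations $u{\bf 1}_K$ supplies exactly such $H$.
\end{itemize}
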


\begin{remark}\label{rm.different} Observe that our definition of ${\bf N}_{r,p}$ in this manuscript technically differs from the definition of the same symbol in \cite{mpt25}, or from the analogous symbol in \cite{hr13}. For each $r\in[1,\infty)$ and $p\in(1,\infty)$, let $\hat{\bf N}_{r,p}$ be the Banach space of functions $u\in L^r_{\loc}(\Omega)$ such that $\Vert\wt{\m N}_r(u)\Vert_{L^p(\partial\Omega)}<+\infty$. In \cite{hr13,mpt25}, the space $\hat{\bf N}_{r,p}$ was considered, rather than the the completion of $C_c(\overline{\Omega})$ under the norm $\Vert\cdot\Vert_{{\bf N}_{r,p}}$. Notice that ${\bf N}_{r,p}\subset\hat{\bf N}_{r,p}$ trivially. The essential distinction between these spaces, from our point of view, is that arbitrary elements in $\hat{\bf N}_{r,p}$ need not have well-defined traces on $\partial\Omega$ (since they may oscillate uncontrollably near $\partial\Omega$ while still maintaining finite $\Vert\cdot\Vert_{{\bf N}_{r,p}}$ norm), but arbitrary elements in ${\bf N}_{r,p}$ always have well-defined traces on $\partial\Omega$, as we will see in short order. This subtle change in the definition of ${\bf N}_{r,p}$ will allow for a significantly cleaner characterization of its dual space in Theorem \ref{thm.dual}. On the other hand, the heart of the matter is not sacrificed, since our techniques also yield extensive insight into the arbitrary elements in the dual space of $\hat{\bf N}_{r,p}$, as explained in Remark \ref{rm.different2}. 
\end{remark}

\begin{lemma}\label{lm.trace} Let $\Omega\subset\bb R^{n+1}$, $n\geq2$, be a   domain satisfying the corkscrew condition and with $n$-Ahlfors regular boundary, let $r\in[1,\infty)$, $p\in(1,\infty)$ and $u\in{\bf N}_{r,p}$. Then $u|_{\partial\Omega}\in L^p(\partial\Omega,\sigma)$ (in the sense of traces).
\end{lemma}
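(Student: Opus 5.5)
The plan is to define the trace first on the dense class $C_c(\overline\Omega)$, prove a uniform estimate there, and then extend by completion. For $u\in C_c(\overline\Omega)$ the trace is just the restriction $u|_{\partial\Omega}$, and the key estimate is the pointwise bound
\[
|u(\xi)|\le \wt{\m N}_r(u)(\xi),\qquad \xi\in\partial\Omega.
\]
To prove it, fix $\xi\in\partial\Omega$. By the corkscrew condition, for every small $t>0$ there is a point $x_t\in\Omega\cap B(\xi,t)$ with $\delta(x_t)\ge ct$. This gives $|x_t-\xi|<t\le c^{-1}\delta(x_t)$. If the aperture $\alpha_0$ satisfies $1+\alpha_0\ge c^{-1}$, then $x_t\in\gamma_{\alpha_0}(\xi)$. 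Otherwise we replace $\alpha_0$ by a large aperture, which only changes $\Vert\cdot\Vert_{{\bf N}_{r,p}}$ up to constants, since the $L^p$ norms for different apertures are comparable.

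Next, the ball $B(x_t,\hat c_0\delta(x_t))$ lies in $B(\xi,2t)$. Since $u$ is uniformly continuous on $\overline\Omega\cap \overline{B(\xi,1)}$, we get
\[
\Big(\dashint_{B(x_t,\hat c_0\delta(x_t))}|u|^r\Big)^{1/r}\longrightarrow |u(\xi)| \quad\text{as } t\to0.
\]
Hence $|u(\xi)|\le \wt{\m N}_{\alpha,r}(u)(\xi)$, and therefore
\[
\Vert u|_{\partial\Omega}\Vert_{L^p(\sigma)}\le \Vert \wt{\m N}_{\alpha,r}u\Vert_{L^p(\sigma)}\lesssim \Vert u\Vert_{{\bf N}_{r,p}}.
\]
So the restriction map $\operatorname{Tr}:C_c(\overline\Omega)\to L^p(\partial\Omega,\sigma)$ is bounded with respect to the ${\bf N}_{r,p}$ norm. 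Because ${\bf N}_{r,p}$ is by definition the completion of $C_c(\overline\Omega)$, $\operatorname{Tr}$ extends uniquely to a bounded linear operator ${\bf N}_{r,p}\to L^p(\partial\Omega,\sigma)$, and this extension is what ``$u|_{\partial\Omega}$ in the sense of traces'' means.

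The remaining task, and the main obstacle, is to show this trace is meaningful: it should depend only on the function $u\in L^r_{\loc}(\Omega)$ that represents the element of the completion. In particular, we must rule out a sequence in $C_c(\overline\Omega)$ that is Cauchy in ${\bf N}_{r,p}$, converges to $0$ in $L^r_{\loc}(\Omega)$, and yet has nonzero boundary traces. To handle this, I would show that the trace is recovered as a non-tangential limit of averages: for $\sigma$-a.e. $\xi$,
\[
\operatorname{Tr}u(\xi)=\lim_{x\to\xi,\ x\in\gamma(\xi)}\dashint_{B(x,\hat c_0\delta(x))}u .
\]

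The argument is the standard one for a.e. convergence via a maximal inequality. Take $v\in C_c(\overline\Omega)$ with $\Vert u-v\Vert_{{\bf N}_{r,p}}<\ep$. Then
\[
\limsup_{x\to\xi}\Big|\dashint_{B(x,\hat c_0\delta(x))} u-\operatorname{Tr}u(\xi)\Big|\le \wt{\m N}_r(u-v)(\xi)+|\operatorname{Tr}(u-v)(\xi)|,
\]
because the corresponding quantity for $v$ vanishes by continuity. The right-hand side has $L^p$ norm $\lesssim\ep$. By Chebyshev's inequality, the set where the left side exceeds $\lambda$ has measure at most $C\ep^p\lambda^{-p}$; letting $\ep\to0$ and then $\lambda\to0$ shows the limit exists a.e. and equals $\operatorname{Tr}u$. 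This identifies the trace intrinsically from $u$, so it is well defined and lies in $L^p(\partial\Omega,\sigma)$. The corkscrew condition is what guarantees that the cones are nonempty near every boundary point, and Ahlfors regularity is used only to justify the change of aperture.
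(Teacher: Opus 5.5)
Your proposal is correct and is essentially the paper's argument: the pointwise bound $|u(\xi)|\le\wt{\m N}_r(u)(\xi)$ for $u\in C_c(\overline\Omega)$ makes the restriction map bounded from the dense class into $L^p(\partial\Omega,\sigma)$, and this map then extends to the completion ${\bf N}_{r,p}$. Your corkscrew justification of that bound, via enlarging the aperture, is more careful than the paper, which just asserts it. Your non-tangential-limit step is a nice extra but not really needed: once the norm is identified as $\Vert\wt{\m N}_r u\Vert_{L^p}$ for the representing function (which that step itself uses), independence from the approximating sequence follows directly from the trace bound.
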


\noindent\emph{Proof.} Fix $u\in{\bf N}_{r,p}$. Then there exists a sequence $\{u_k\}\subset C_c(\overline\Omega)$ such that $u_k\ra u$ strongly in ${\bf N}_{r,p}$. Since
\[
v(\xi)\leq\wt{\m N}_r(v)(\xi),\qquad\text{for all }\xi\in\partial\Omega, \text{ and all } v\in C_c(\overline\Omega),
\]
then note that for all $k,\ell\in\bb N$,
\[
\Vert u_k-u_{\ell}\Vert_{L^p(\partial\Omega,\sigma)}\leq\Vert u_k-u_{\ell}\Vert_{{\bf N}_{r,p}}\lra0\quad\text{ as }k,\ell\ra\infty,
\]
so that $\{u_k\}$ is Cauchy in $L^p(\partial\Omega)$. Let $g_u:=\lim_ku_k$ in the strong $L^p$ topology. It is easy to check that $g_u$, as an element in $L^p(\partial\Omega)$, is independent of the approximating sequence $\{u_k\}$.  Since $u$ was arbitrary in ${\bf N}_{r,p}$, then by a standard argument, there is a bounded linear trace operator $T:{\bf N}_{r,p}\ra L^p(\partial\Omega,\sigma)$ given by $T(u)=g_u$ for all $u\in{\bf N}_{r,p}$, which satisfies that
\begin{equation}\label{eq.trace}
\Vert g_u\Vert_{L^p(\partial\Omega)}\leq\Vert u\Vert_{{\bf N}_{r,p}}.
\end{equation}
It is clear that $u=g_u$ $\sigma$-a.e.\ on $\partial\Omega$ whenever $u\in C_c(\overline\Omega)$. We thus write $u|_{\partial\Omega}:=g_u$, and the desired result follows.\hfill{$\square$}

\subsection{Elliptic PDE preliminaries}\label{sec.pde}
We assume throughout that $A$ is a real, not necessarily symmetric $(n+1)\times(n+1)$ matrix of   merely  bounded measurable coefficients in $\Omega$ verifying the strong ellipticity conditions 
\begin{equation}\label{eq.elliptic}
	\lambda|\xi|^2\leq\sum_{i,j=1}^{n+1}A_{ij}(x)\xi_i\xi_j,\qquad \Vert A\Vert_{L^{\infty}(\Omega)}\leq\frac1{\lambda},\quad x\in\Omega,\quad\xi\in\bb R^{n+1}.
\end{equation}  
Define the elliptic operator $L$ acting formally on real-valued functions $u$ by
\[
Lu=-\dv(A\nabla u)=-\sum_{i,j=1}^{n+1}\frac{\partial}{\partial x_i}\Big(a_{ij}\frac{\partial u}{\partial x_j}\Big).
\]
We write $A^T$ for the transpose of $A$, and $L^*=-\dv A^T\nabla$.

We will also make use of the conormal derivative. Let $\m T(\partial\Omega)$ denote the trace space of $W^{1,2}(\Omega)$; that is, $\m T(\partial\Omega)$ is the quotient space $W^{1,2}(\Omega)/W^{1,2}_0(\Omega)$.

\begin{definition}[Conormal Derivative]\label{def.conormal} Given $H\in\Lip_c(\Omega)$ and the unique weak solution $v\in W_0^{1,2}(\Omega)$ to the Poisson  problem
	\begin{equation}\label{eq.poissonh}
		\left\{\begin{aligned}-\dv A\nabla v&=H,\qquad&\text{in }&\Omega,\\ v&=0,\qquad&\text{on }&\partial\Omega.\end{aligned}\right. 
	\end{equation}
	define the functional
	\begin{equation}\label{eq.funct}
		\ell_v(\varphi)=B[v,\Phi]:=\int_\Omega A\nabla v\nabla\Phi\,dm-\int_\Omega H\Phi\,dm,\qquad\varphi\in\m T(\partial\Omega),
	\end{equation}
	where $\Phi\in W^{1,2}(\Omega)$ satisfies $\operatorname{Tr}\Phi=\varphi$.  It is easy to see that $\ell_v(\varphi)$ is well-defined and independent of the extension $\Phi$, since $v$ solves the equation $Lv=H$ in the weak sense. We call $\ell_v(\varphi)$ the \emph{conormal derivative of $v$}, and denote $\partial_{\nu_A}v=\ell_v$.
\end{definition}

\begin{definition}[The Dirichlet problem $(\operatorname{D}_p^L)$]\label{def.dirichlet} Let $p\in(1,\infty)$. We say that the (homogeneous) \emph{Dirichlet problem for the operator $L$ with $L^{p}$ data in $\Omega$ is solvable} (write $(\Di_{p}^L)$ is solvable in $\Omega$), if there exists $C\geq1$ so that for each $g\in C(\partial\Omega)$, the solution $u$ to the continuous Dirichlet problem 
\begin{equation}\label{eq.dirichlet2}\nonumber
\left\{\begin{aligned}-\dv A\nabla u&=0,\quad&\text{in }&\Omega,\\u&=g,\quad&\text{on }&\partial\Omega,\end{aligned}\right.
\end{equation}
satisfies the estimate 
\begin{equation}\label{eq.direst}\nonumber
\Vert\m N(u)\Vert_{L^{p}(\partial\Omega,\sigma)}\leq C\,\Vert g\Vert_{L^{p}(\partial\Omega,\sigma)}.
\end{equation}
\end{definition}

\subsection{Useful lemmas}

The following extension result is fundamental to our program.

\begin{lemma}[The Varopoulos extension, {\cite[Theorem 1.4]{mz25}}]\label{lm.varopoulos} Let $\Omega\subset\bb R^{n+1}$, $n\geq2$, be a    domain satisfying the corkscrew condition and with $n$-Ahlfors regular boundary, and fix $g\in \Lip_c(\partial\Omega)$. Then there exists a function $\Phi:\overline{\Omega}\ra\bb R$ with the following properties:
	\begin{enumerate}[(i)]
		\item\label{item.v1} $\Phi\in C^{\infty}(\Omega)\cap\Lip(\overline{\Omega})\cap\dt W^{1,2}(\Omega)$.
		\item\label{item.v2} $\Vert\m N(\Phi)\Vert_{L^p(\sigma)}+\Vert\n C_\infty(\nabla\Phi)\Vert_{L^p(\sigma)}\lesssim\Vert g\Vert_{L^p(\sigma)}$, for each $p\in(1,\infty]$.
		\item\label{item.v3} $\Vert\m N(\delta_\Omega\nabla\Phi)\Vert_{L^p(\sigma)}\lesssim\Vert g\Vert_{L^p(\sigma)}$.
		\item\label{item.v4} $\Phi|_{\partial\Omega}=g$ continuously.
	\end{enumerate}
	We call $\Phi$ the \emph{Varopoulos extension} of $g$ onto $\Omega$.
\end{lemma}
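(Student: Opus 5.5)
The plan is to construct $\Phi$ by a stopping-time (corona-type) regularization of the dyadic averages of $g$, rather than by a plain Poisson-type average. The reason is that the naive extension $\Phi_0(x):=\sum_{I}\varphi_I(x)\,\dashint_{\Delta_I}g\,d\sigma$ has a specific defect. Here $\{I\}$ is a Whitney decomposition of $\Omega$, $\{\varphi_I\}$ a smooth partition of unity subordinate to it, and $\Delta_I$ a surface ball of radius $\approx\ell(I)$ near $I$. This $\Phi_0$ satisfies \ref{item.v1}, \ref{item.v3}, \ref{item.v4} and the $\m N$-part of \ref{item.v2}, because $\m N(\Phi_0)+\m N(\delta\nabla\Phi_0)\lesssim Mg$ with $M$ the Hardy--Littlewood maximal function on $(\partial\Omega,\sigma)$, which is a space of homogeneous type by Ahlfors regularity. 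However, $|\nabla\Phi_0|\approx\operatorname{osc}/\delta$ is not integrable against Carleson boxes in the required way. So I would keep the same skeleton but replace the averages by frozen, stopping-time values.

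\textbf{Step 1 (dyadic setup).} Fix a Christ--David dyadic lattice $\mathbb D$ on $\partial\Omega$. Using the corkscrew condition and Ahlfors regularity, associate to each $Q\in\mathbb D$ a Whitney region $U_Q\subset\Omega$ with $\diam U_Q\approx\delta|_{U_Q}\approx\ell(Q)$, so that $\bigcup_Q U_Q=\Omega$ with bounded overlap. Fix a small $\ep>0$.

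\textbf{Step 2 (stopping time).} For a top cube $Q_0$, set $a_{Q_0}=\dashint_{Q_0}g$. Declare a subcube $Q'\subset Q_0$ a stopping cube if it is maximal with
\[
\Big|\dashint_{Q'}g-a_{Q_0}\Big|>\ep\,\inf_{Q'}Mg .
\]
Iterate inside each stopping cube to obtain a tree of stopping cubes $\m S$. For every $Q\in\mathbb D$ let $a_Q:=a_{S(Q)}$, where $S(Q)$ is the minimal stopping cube containing $Q$. Then define
\[
\Phi:=\sum_{Q}\psi_Q\,a_Q
\]
with $\{\psi_Q\}$ a smooth partition of unity adapted to $\{U_Q\}$. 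Smoothness in $\Omega$ is immediate. Since $g$ is Lipschitz, stopping cubes near a Lebesgue point shrink, so $a_Q\to g(\xi)$ uniformly as $Q\to\xi$. This gives \ref{item.v4} and $\Phi\in\Lip(\overline\Omega)$, using $\delta|\nabla\Phi|\lesssim\sup|a_Q-a_{Q'}|$ over adjacent cubes $Q,Q'$. The finiteness of the energy in $\dt W^{1,2}$ follows from the same Lipschitz bound together with the compact support of $g$ and decay far away.

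\textbf{Step 3 (the estimates).} Because each $a_Q$ lies within $\ep\,Mg$ of an average of $g$, we get $\m N(\Phi)\lesssim Mg$ and $\m N(\delta\nabla\Phi)\lesssim Mg$. This yields the first half of \ref{item.v2} and all of \ref{item.v3}, via the $L^p$-boundedness of $M$ for $p\in(1,\infty]$.

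\textbf{Step 4 (main obstacle: the Carleson bound).} The hard part is $\Vert\n C_\infty(\nabla\Phi)\Vert_{L^p}\lesssim\Vert g\Vert_{L^p}$. The gradient $\nabla\Phi$ vanishes except on Whitney regions $U_Q$ adjacent to a boundary of the stopping tree, that is, where $a_Q$ jumps. On such regions $|\nabla\Phi|\lesssim|a_{S}-a_{S'}|/\ell(Q)$. Hence $\int_{B(\xi,r)\cap\Omega}|\nabla\Phi|$ is bounded by a sum over stopping cubes $S$ inside $\Delta(\xi,Cr)$ of $|a_S-a_{\hat S}|\,\sigma(S)$, where $\hat S$ is the parent stopping cube; the additional factor comes from the sawtooth boundaries having $n$-dimensional size $\approx\sigma(S)$, by Ahlfors regularity. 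The stopping rule gives $|a_S-a_{\hat S}|\lesssim\inf_S Mg$ from above. To obtain the packing, I would try first to prove the pointwise bound
\[
\n C_\infty(\nabla\Phi)(\xi)\lesssim M(Mg)(\xi),
\]
which immediately gives the claim for $p\in(1,\infty]$. The crucial input is a packing condition $\sum_{S\subset R,\,S\in\m S}\sigma(S)\lesssim_\ep\sigma(R)$, weighted by the jumps, for every cube $R$. I would derive this in the style of Hyt\"onen--Ros\'en, stopping on the oscillation relative to $Mg$ rather than to $\Vert g\Vert_\infty$, so that the argument works uniformly in $p$. If the weighted packing fails, the fallback is to stop on level sets $\{Mg>2^k\}$ and to define $a_Q$ through a layer-cake decomposition of $g$, in the manner of \cite{hr18}. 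Each layer is then a bounded function handled by the classical $L^\infty$ Varopoulos argument, and the layers are summed in $L^p$.
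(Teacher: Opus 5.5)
The paper does not prove this lemma; it is quoted from \cite[Theorem 1.4]{mz25}. So your argument has to stand on its own, and as written it does not give (iv), nor the Lipschitz part of (i). The claim in Step 2, that for Lipschitz $g$ ``stopping cubes near a Lebesgue point shrink, so $a_Q\to g(\xi)$'', is backwards.

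If $S$ is a stopping cube and $Q\subset S$, then $|\dashint_Q g-\dashint_S g|\le\Lip(g)\diam S$, while $\inf_Q Mg\ge\inf_S Mg>0$. Hence a stopping cube with $\Lip(g)\diam S<\ep\inf_S Mg$ has no stopping subcubes, and every chain of stopping cubes is finite. So $\Phi$ converges non-tangentially at $\xi$ to the value $a_S$ of the terminal stopping cube containing $\xi$. This limit is a step function $h$, and it only satisfies $|h-g|\le\ep Mg$.

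Here is a concrete counterexample. Let $\partial\Omega$ be bounded and $g=1+\theta h_0$, with $h_0$ Lipschitz and nonconstant, $|h_0|\le1$, and $0<\theta<\ep/4$. Then $|\dashint_Q g-\dashint_{\partial\Omega}g|\le2\theta<\ep(1-\theta)\le\ep\inf Mg$ for every $Q$, so nothing ever stops. Thus $\Phi\equiv\dashint_{\partial\Omega}g\neq g$ on $\partial\Omega$.

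There is a second defect. Wherever two terminal stopping cubes with $a_S\neq a_{S'}$ abut (e.g.\ on a sphere), $|\nabla\Phi|\approx|a_S-a_{S'}|/\ell(Q)\to\infty$ along the interface, so $\Phi\notin\Lip(\overline\Omega)$. Your bound $\delta|\nabla\Phi|\lesssim\sup|a_Q-a_{Q'}|$ only gives $|\nabla\Phi|\lesssim Mg/\delta$. What you have built is an $\ep$-approximant (as in Garnett's construction and \cite{hr18}), not an extension of $g$.

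\textbf{A repair.} The defect can be fixed with what you already have. Write $\Phi_{\mathrm{st}}$ for your stopping-time function and $\Phi_0$ for the naive Whitney-average extension. Both satisfy $\m N(\cdot)+\m N(\delta\nabla\cdot)\lesssim Mg$. Moreover $\Phi_0\in\Lip(\overline\Omega)$ with $\Phi_0|_{\partial\Omega}=g$, because $|\Phi_0(x)-g(\xi)|\lesssim\Lip(g)|x-\xi|$ and $|\nabla\Phi_0|\lesssim\Lip(g)$.

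Set $\Phi:=(1-\eta)\Phi_{\mathrm{st}}+\eta\Phi_0$, with $\eta\equiv1$ on $\{\delta<t/2\}$, $\supp\eta\subset\{\delta<t\}$, and $|\nabla\eta|\lesssim t^{-1}$. Then:
\begin{itemize}
\item $\Phi=\Phi_0$ near $\partial\Omega$, which gives (i) and (iv).
\item $\n C_\infty(\eta\nabla\Phi_0)\lesssim t\Lip(g)$ pointwise, with support near $\supp g$. Choosing $t$ small depending on $g$ makes its $L^p$ norm $\lesssim\Vert g\Vert_{L^p}$ without affecting the implicit constant.
\item $\n C_\infty((\Phi_0-\Phi_{\mathrm{st}})\nabla\eta)\lesssim M(\m N(\Phi_0-\Phi_{\mathrm{st}}))\lesssim M(Mg)$, by the same annular argument the paper gives for (\ref{eq.carlesonineq}).
\end{itemize}

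\textbf{Step 4.} Separately, Step 4 is only sketched. The packing your stopping rule actually delivers is the weighted one,
\[
\sum_{S\in\m S,\,S\subset R}\sigma(S)\inf_S Mg\lesssim_\ep\int_R Mg\,d\sigma .
\]
To prove it:
\begin{itemize}
\item Group the $S$ by $\inf_S Mg\in[2^k,2^{k+1})$, and localize to the maximal cubes $R_i$ with $\inf_{R_i}Mg\ge2^k$.
\item The $S$ whose parent stopping cube is not contained in $R_i$ are disjoint.
\item For the remaining $S$, the increments $\dashint_S g-\dashint_{\hat S}g$ are increments of the martingale of $g$ stopped on cubes with $\inf Mg\ge2^{k+1}$. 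That martingale is bounded by $C2^k$, so $L^2$-orthogonality gives $\sum\sigma(S)\lesssim\ep^{-2}\sigma(R_i)$.
\item Summing over $k$ gives the weighted packing.
\end{itemize}
You must also count the lateral jumps $|a_S-a_{S'}|$ between neighbouring trees. This uses the small-boundary property of the dyadic cubes, not just Ahlfors regularity. With these two ingredients, your bound $\n C_\infty(\nabla\Phi_{\mathrm{st}})\lesssim M(Mg)$ does hold.
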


We will also make use of the following technical approximation of integrable functions on the boundary of a domain by continuous functions on the boundary, essentially mentioned in  \cite[Lemma 3.5]{chm} without proof. The result is indeed a straightforward generalization of the classical theory of convolutions and approximate identities, thus its proof is omitted.

\begin{lemma}\label{lm.approxid} Let $\varphi\in C_c^\infty(\bb R)$ be such that ${\bf 1}_{(0,1)}\leq\varphi\leq{\bf 1}_{(0,2)}$. For each $t>0$ and  $h\in L^1_{\loc}(\partial\Omega,\sigma)$ define
\[
P_th(\xi):=\int_{\partial\Omega}\varphi_t(\xi,\zeta)h(\zeta)\,d\sigma(\zeta),\qquad\xi\in\partial\Omega,
\]
where
\[
\varphi_t(\xi,\zeta):=\frac1{\int_{\partial\Omega}\varphi\big(\frac{|\xi-z|}t\big)\,d\sigma(z)}\varphi\Big(\frac{|\xi-\zeta|}t\Big),\qquad\xi,\zeta\in\partial\Omega.
\]
The following hold:
\begin{enumerate}[(i)]
	\item If $h\in L^q(\partial\Omega,\sigma)$, $q\in[1,\infty)$, then $P_th\in L^\infty(\partial\Omega)\cap C(\partial\Omega)$, and $P_th\lra h$ strongly in $L^q(\partial\Omega,\sigma)$ as $t\ra0$.
	\item If $h$ is continuous at $\xi\in\partial\Omega$ (as a function on $\partial\Omega$), then $P_th(\xi)$ converges to $h(\xi)$ as $t\ra0$.
	\item If $h\in L^q(\partial\Omega,\sigma)$, $q\in[1,\infty]$, with $\supp h$ is contained in a ball of radius $r$, then for each $t>0$, $\supp P_th$ is contained in the concentric ball of radius $r+2t$.
\end{enumerate}
\end{lemma}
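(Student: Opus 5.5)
The plan is to reduce everything to two kernel bounds that follow from $n$-Ahlfors regularity, and then run the classical approximate-identity argument. Write $D_t(\xi):=\int_{\partial\Omega}\varphi(|\xi-z|/t)\,d\sigma(z)$ for the normalizing denominator. Since ${\bf 1}_{(0,1)}\leq\varphi\leq{\bf 1}_{(0,2)}$, we have
\[
\sigma(B(\xi,t))\lesssim D_t(\xi)\leq\sigma(B(\xi,2t)),
\]
up to the null sphere issue, which is harmless because $\varphi$ is continuous and nonnegative. Ahlfors regularity then gives $D_t(\xi)\approx\min\{t,\diam\partial\Omega\}^n$. In particular $D_t(\xi)>0$, so $\varphi_t$ is well defined, and:
\begin{enumerate}[(a)]
\item $\varphi_t(\xi,\cdot)\geq0$, it is supported in $B(\xi,2t)$, and $\int_{\partial\Omega}\varphi_t(\xi,\zeta)\,d\sigma(\zeta)=1$ for every $\xi$.
\item $\varphi_t(\xi,\zeta)\lesssim\sigma(B(\zeta,2t))^{-1}{\bf 1}_{|\xi-\zeta|<2t}$. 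Here we use that $D_t(\xi)\gtrsim\sigma(B(\xi,t))\approx\sigma(B(\zeta,2t))$ when $|\xi-\zeta|<2t$, by doubling. Consequently $\sup_\zeta\int_{\partial\Omega}\varphi_t(\xi,\zeta)\,d\sigma(\xi)\lesssim1$.
\end{enumerate}
By Schur's test, (a) and (b) give $\Vert P_th\Vert_{L^q(\sigma)}\leq C\Vert h\Vert_{L^q(\sigma)}$ for all $q\in[1,\infty]$, with $C$ independent of $t$.

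For (i), we first show $P_th\in L^\infty$. By (a), (b) and H\"older,
\[
|P_th(\xi)|\lesssim\sigma(B(\xi,2t))^{-1/q}\Vert h\Vert_{L^q(\sigma)}\lesssim_t\Vert h\Vert_{L^q(\sigma)}.
\]
Next, continuity of $P_th$ follows from dominated convergence. As $\xi\to\xi_0$, the maps $\xi\mapsto\varphi(|\xi-\zeta|/t)$ converge pointwise and are dominated by ${\bf 1}_{B(\xi_0,2t+1)}$. Hence both $D_t(\xi)$ and $\int\varphi(|\xi-\zeta|/t)h(\zeta)\,d\sigma(\zeta)$ are continuous in $\xi$, using $h\in L^1_{\loc}$. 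The quotient is continuous since $D_t>0$.

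For the convergence $P_th\to h$ in $L^q$, $q<\infty$, we argue by density. Since $\sigma$ is a Radon measure, $C_c(\partial\Omega)$ is dense in $L^q(\sigma)$. If $h\in C_c(\partial\Omega)$, then (a) gives
\[
|P_th(\xi)-h(\xi)|\leq\sup_{\zeta\in B(\xi,2t)}|h(\zeta)-h(\xi)|.
\]
This tends to $0$ uniformly by uniform continuity. Moreover $P_th-h$ is supported in a fixed bounded set for $t\leq1$, which has finite $\sigma$-measure, so uniform convergence upgrades to $L^q$ convergence. The uniform bound $\Vert P_t\Vert_{L^q\to L^q}\leq C$ and a standard $\varepsilon/3$ argument then extend the convergence to all $h\in L^q$.

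Part (ii) follows from the same pointwise estimate, now at the single point $\xi$: by (a), $|P_th(\xi)-h(\xi)|\leq\int\varphi_t(\xi,\zeta)|h(\zeta)-h(\xi)|\,d\sigma(\zeta)\leq\sup_{B(\xi,2t)}|h-h(\xi)|$, which tends to $0$ by continuity at $\xi$. Part (iii) is immediate from the support of the kernel. If $\supp h\subset B(x_0,r)$ and $|\xi-x_0|\geq r+2t$, then every $\zeta$ with $\varphi_t(\xi,\zeta)\neq0$ satisfies $|\xi-\zeta|<2t$, hence $|\zeta-x_0|>r$, so $P_th(\xi)=0$. The only step needing genuine care is the lower bound on $D_t$ and its comparability across nearby points in (b). This is exactly where Ahlfors regularity, and the case distinction $t\gtrless\diam\partial\Omega$ for bounded boundaries, enters.
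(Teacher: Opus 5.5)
Your proposal is correct, and it is the classical approximate-identity argument the paper has in mind when it omits the proof: Ahlfors regularity gives $D_t(\xi)\approx\sigma(B(\xi,t))$, Schur's test gives uniform $L^q$ bounds, and density of $C_c(\partial\Omega)$ finishes the convergence. Two small points: as literally stated, ${\bf 1}_{(0,1)}\le\varphi\le{\bf 1}_{(0,2)}$ contradicts continuity of $\varphi$ at $0$, so you are implicitly using the intended ${\bf 1}_{[0,1)}\le\varphi\le{\bf 1}_{[0,2)}$ on $[0,\infty)$ (which is all your argument needs); and in (iii) you only place $\{P_th\neq0\}$ in the open ball $B(x_0,r+2t)$, so to place the closed set $\supp P_th$ there too, note that $\supp h$ is compact and hence lies in $B(x_0,r-\varepsilon)$ for some $\varepsilon>0$.
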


Next we record the easily shown fact that for each compact set $K\subset\Omega$, the space $L^r(K)$ embeds continuously into ${\bf N}_{r,p}$.

\begin{lemma}\label{lm.local} Let $\Omega\subset\bb R^{n+1}$, $n\geq2$, be a  domain satisfying the corkscrew condition and with $n$-Ahlfors regular boundary, let $K$ be a compact subset of $\Omega$, and $r\in[1,\infty)$, $p\in(1,\infty)$. Then there exists a constant $C_K$   such that for every $u\in L^r(K)$ with $u\equiv0$ on $\overline\Omega\setminus K$, we have the estimate
	\[
	\Vert\wt{\m N}_r(u)\Vert_{L^p(\partial\Omega)}\leq C_K\Vert u\Vert_{L^r(K)}.
	\]
\end{lemma}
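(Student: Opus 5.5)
The plan is to bound $\wt{\m N}_r(u)(\xi)$ pointwise by a constant depending on $K$ times $\Vert u\Vert_{L^r(K)}$, and to show that this bound is nonzero only on a bounded piece of $\partial\Omega$. Set $d_K:=\dist(K,\partial\Omega)>0$ and $R_K:=\sup_{y\in K}\dist(y,\partial\Omega)<\infty$; both are finite and positive because $K$ is compact in $\Omega$.

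Fix $\xi\in\partial\Omega$, a point $x\in\gamma_{\alpha_0}(\xi)$, and write $B_x:=B(x,\hat c_0\delta(x))$. Since $u$ vanishes off $K$, the average over $B_x$ is zero unless $B_x\cap K\neq\emptyset$. So suppose $y\in B_x\cap K$. Because $\hat c_0\le 1/2$ we have $\delta(y)\ge\delta(x)/2$ and $\delta(y)\le \tfrac32\delta(x)$. This gives $\delta(x)\ge \tfrac23\delta(y)\ge \tfrac23 d_K$ and $\delta(x)\le 2R_K$. We then estimate
\[
\Big(\dashint_{B_x}|u|^r\Big)^{1/r}\le m(B_x)^{-1/r}\Vert u\Vert_{L^r(K)}\le C(\hat c_0 d_K)^{-(n+1)/r}\Vert u\Vert_{L^r(K)}.
\]
For the same $x$ and $\xi$, the cone condition gives
\[
|\xi-y|\le|\xi-x|+|x-y|<(1+\alpha_0)\delta(x)+\hat c_0\delta(x)\le C R_K .
\]
Hence $\wt{\m N}_r(u)(\xi)=0$ unless $\xi\in B(K,CR_K)\cap\partial\Omega$. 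This set is contained in a ball $B(\xi_0,\rho_K)$ with $\xi_0\in\partial\Omega$ and $\rho_K\lesssim \diam K+R_K$, because $K$ is bounded.

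Combining the two bounds, we get
\[
\Vert\wt{\m N}_r(u)\Vert_{L^p(\partial\Omega)}\le C d_K^{-(n+1)/r}\,\sigma\big(B(\xi_0,\rho_K)\cap\partial\Omega\big)^{1/p}\Vert u\Vert_{L^r(K)}.
\]
By upper Ahlfors regularity, $\sigma(B(\xi_0,\rho_K))\le C_0\rho_K^n$. If $\rho_K>\diam\partial\Omega$, we use $\sigma(\partial\Omega)\le C_0(\diam\partial\Omega)^n$ instead, which is finite for bounded $\Omega$. In the unbounded-boundary case the radius restriction in Ahlfors regularity is vacuous, so the bound $C_0\rho_K^n$ applies directly. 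This proves the lemma with $C_K:=C d_K^{-(n+1)/r}\rho_K^{n/p}$.

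The only point needing care is the geometric step: showing that the balls $B_x$ meeting $K$ have radius comparable to $d_K$ from below and that their vertices $\xi$ stay in a bounded region. Both facts follow from $\hat c_0\le 1/2$ and the cone condition. Everything else is Hölder's inequality and Ahlfors regularity. The corkscrew condition is not needed.
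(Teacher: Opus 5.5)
Your argument is correct and is the same direct computation the paper has in mind. The paper omits the proof of this lemma, but carries out exactly this reasoning for the bump function $\phi$ in the absolute-continuity step of the proof of Theorem \ref{thm.dual}: balls $B(x,\hat c_0\delta(x))$ meeting $K$ have $\delta(x)\approx_K 1$, which yields the pointwise bound and confines the support of $\wt{\m N}_r(u)$ to a bounded piece of $\partial\Omega$, and upper Ahlfors regularity finishes. The only cosmetic point is in the last step: all you need there is $\sigma(\partial\Omega)<\infty$ whenever $\partial\Omega$ is bounded, which also covers unbounded $\Omega$ with bounded boundary, so the precise constant in $\sigma(\partial\Omega)\le C_0(\diam\partial\Omega)^n$ is immaterial.
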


The following technical result shows that ${\bf N}_{r,p}$ misses from $\hat{\bf N}_{r,p}$  only the behavior at the boundary of the domain (see Remark \ref{rm.different} for the definition of $\hat{\bf N}_{r,p}$); it will be useful later in the proof of Theorem \ref{thm.dual}. Its omitted proof is a  straightforward density argument using Lemma \ref{lm.local}.

\begin{lemma}\label{lm.nc}  Let $\Omega\subset\bb R^{n+1}$, $n\geq2$, be a  domain satisfying the corkscrew condition and with $n$-Ahlfors regular boundary, let $K$ be a compact subset of $\Omega$, $r\in[1,\infty)$, $p\in(1,\infty)$ and $u\in L^r_{\loc}(\Omega)$ with $\Vert\wt{\m N}_r(u)\Vert_{L^p(\partial\Omega)}<+\infty$. Then $u{\bf1}_K\in{\bf N}_{r,p}$. More precisely, there is a sequence $\{v_j\}_j\subset C_c(K)$ such that $v_j\ra u{\bf 1}_K$ strongly in ${\bf N}_{r,p}$ and in $L^r(K)$ as $j\ra\infty$.
\end{lemma}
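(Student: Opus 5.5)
The plan is to approximate $u{\bf 1}_K$ in two stages: first by functions in $L^r(K)$ that are continuous and compactly supported in $\Omega$, then to use Lemma \ref{lm.local} to turn $L^r(K)$ convergence into ${\bf N}_{r,p}$ convergence. Before that, we have to make sure $u{\bf 1}_K$ lies in $L^r(K)$ at all. Since $K$ is compact in $\Omega$, we have $\delta:=\dist(K,\partial\Omega)>0$ and $\diam K<\infty$. Moreover, $u\in L^r_{\loc}(\Omega)$ and $K$ is compact in $\Omega$, so $u{\bf 1}_K\in L^r(K)$ directly; the hypothesis $\Vert\wt{\m N}_r(u)\Vert_{L^p}<\infty$ is in fact not needed for this beyond local integrability. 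Lemma \ref{lm.local} then gives $\Vert\wt{\m N}_r(u{\bf 1}_K)\Vert_{L^p(\partial\Omega)}\le C_K\Vert u\Vert_{L^r(K)}<\infty$.

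The second step is density. A compact $K$ may have empty interior, and $C_c(K)$ should be read as continuous functions supported in $K$. If $K$ has no interior (or a thin boundary), $C_c(K)$ need not be dense in $L^r(K)$ in any meaningful sense. To avoid this, I would read $C_c(K)$ as $C_c$ of a slightly larger compact set $K'=\overline{B(K,\delta/2)}\subset\Omega$, or as $C_c(\operatorname{int}K)$ plus the claim that $m(\partial K)$ is negligible. The cleanest route is the following. Apply Lusin's theorem / standard $L^r$ density: since $m$ is a Radon measure on $\bb R^{n+1}$, $C_c(U)$ is dense in $L^r(U)$ for the open set $U=B(K,\delta/2)$. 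So choose $v_j\in C_c(U)$ with $\Vert v_j-u{\bf 1}_K\Vert_{L^r(U)}\to0$. If the statement insists on support in $K$, I would instead take $U=\operatorname{int}K$ and note that $u{\bf 1}_K$ and $u{\bf 1}_{\operatorname{int}K}$ differ on $\partial K$. That difference is controlled only when $m(\partial K)=0$, so in that case I would fall back on the enlarged compact $K'$ and apply the argument with $C_{K'}$. This is the one point where the statement needs care, and it is where I expect the main (definitional rather than analytic) obstacle to lie.

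Finally, apply Lemma \ref{lm.local} with the compact set $\overline U\subset\Omega$ to the differences $v_j-u{\bf 1}_K$, which vanish off $\overline U$:
\[
\Vert v_j-u{\bf 1}_K\Vert_{{\bf N}_{r,p}}\le C_{\overline U}\Vert v_j-u{\bf 1}_K\Vert_{L^r(\overline U)}\lra0.
\]
Each $v_j\in C_c(\Omega)\subset C_c(\overline\Omega)$. Hence $u{\bf 1}_K$ is an ${\bf N}_{r,p}$-limit of elements of $C_c(\overline\Omega)$, and therefore belongs to the completion ${\bf N}_{r,p}$. This gives both the strong ${\bf N}_{r,p}$ convergence and the $L^r$ convergence at once.
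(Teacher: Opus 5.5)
Your proof is correct and is the density argument via Lemma~\ref{lm.local} that the paper has in mind; the paper omits this proof. You are also right that only $u\in L^r_{\loc}(\Omega)$ is needed here, and your caveat is legitimate: functions in $C_c(K)$ must vanish on $\partial K$, so when $m(\partial K)>0$ (e.g.\ $K$ of positive measure with empty interior) they cannot in general approximate $u{\bf 1}_K$ in $L^r(K)$, and enlarging $K$ to $K'=\overline{B(K,\dist(K,\partial\Omega)/2)}\subset\Omega$ is the right fix, which is harmless for the lemma's use in the proof of Theorem~\ref{thm.dual} since $F\in L^{r'}_{\loc}(\Omega)$ there.
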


\section{The dual of the space ${\bf N}_{r,p}$}\label{sec.dual}

In this section, we prove Theorem \ref{thm.dual}.

\begin{remark}\label{rm.hope} There is no hope for the identity (\ref{eq.dual}) to be valid with $r=+\infty$. Indeed, note that ${\bf N}_{\infty,p}$ coincides exactly with the completion of $C_c(\overline{\Omega})$ in the norm $\Vert\m N(u)\Vert_p$, where $\m N$ is the classical non-tangential maximal function (with a possibly slightly larger cone aperture), and the dual space to this space is well-understood. For instance, if $\Omega$ is the half-space, then the dual space of ${\bf N}_{\infty,1}$ is known to be precisely the space of Carleson measures \cite{cms}, and Carleson measures  need not obey the decomposition in (\ref{eq.dual}). As a concrete and easy example, consider the Dirac delta measure with pole at a point $x\in\Omega$; then it is straightforward to explicitly compute that this is a Carleson measure and that it represents a bounded linear functional on ${\bf N}_{\infty,p}$ for any $p\in(1,\infty)$.
\end{remark}

\noindent\emph{Proof of Theorem \ref{thm.dual}.} Fix $r\in[1,\infty)$, $p\in(1,\infty)$. From \cite{hr13,mpt25}, it is already known that
\begin{equation}\label{eq.dual0}
{\bf C}_{r',p'}\subset({\bf N}_{r,p})^*.
\end{equation}
We show that
\begin{equation}\label{eq.dual1}
L^{p'}(\partial\Omega,\sigma)\subset ({\bf N}_{r,p})^*,
\end{equation}
and that 
\begin{equation}\label{eq.dual2}
({\bf N}_{r,p})^*\subset{\bf C}_{r',p'}\oplus L^{p'}(\partial\Omega,\sigma),
\end{equation}
with the latter containment being the challenging one.  Note that (\ref{eq.dual0}), (\ref{eq.dual1}), and (\ref{eq.dual2}) together   imply the desired identity (\ref{eq.dual}). In particular, (\ref{eq.dual1}) and (\ref{eq.dual2}) together imply the statement (i) of the theorem.

{\bf Proof of }(\ref{eq.dual1}). Fix $g\in L^{p'}(\partial\Omega)$.   Define the functional
\[
T_g(u):=\int_{\partial\Omega}u|_{\partial\Omega}\,g\,d\sigma,\qquad u\in{\bf N}_{r,p},
\]
and note that, by Lemma \ref{lm.trace}, $T_g$ is well-defined on ${\bf N}_{r,p}$. Moreover, by H\"older's inequality,
\[
|T_g(u)|\leq\Vert u\Vert_{L^p(\partial\Omega)}\Vert g\Vert_{L^{p'}(\partial\Omega)}\leq\Vert u\Vert_{{\bf N}_{r,p}}\Vert g\Vert_{L^{p'}(\partial\Omega)},
\]
where we used (\ref{eq.trace}). Since $g$ was arbitrary, the containment in (\ref{eq.dual1}) is shown.

{\bf Proof of }(\ref{eq.dual2}). Let $T$ be a bounded   linear functional on ${\bf N}_{r,p}$. In particular, $T$ is a bounded   linear functional on $C_c(\overline\Omega)$. By the Riesz Representation Theorem, there exists a unique (complex or signed) regular Borel measure $\mu$ on $\overline\Omega$ such that
\begin{equation}\label{eq.ide}
T(u)=\int_{\overline\Omega}u\,d\mu,\qquad\text{for each }u\in C_c(\overline\Omega).
\end{equation}
Consider the following claim.

\begin{claim}\label{claim.decomp} The measure $\mu$ decomposes as
\begin{equation}\label{eq.omega}
d\mu = F\,dm + g\,d\sigma,
\end{equation}
where $m$ is the $(n+1)$-dimensional Lebesgue measure restricted to $\Omega$, $\sigma$ is the $n$-dimensional Hausdorff measure on $\partial\Omega$, $F\in{\bf C}_{r',p'}$, and $g\in L^{p'}(\partial\Omega,\sigma)$.
\end{claim}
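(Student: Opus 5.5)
Split $\mu=\mu_\Omega+\mu_\partial$, with $\mu_\Omega:=\mu|_{\Omega}$ and $\mu_\partial:=\mu|_{\partial\Omega}$. The two pieces are handled separately: the interior piece by testing against functions supported in compacts of $\Omega$ and using the quasi-duality of Proposition \ref{prop.duality}, and the boundary piece by testing against functions concentrating near $\partial\Omega$ and comparing with $\sigma$ via Ahlfors regularity.

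\emph{Interior part.} Fix a compact $K\subset\Omega$. By Lemma \ref{lm.local}, for $u\in C_c(\Omega)$ supported in $K$ we have $|T(u)|\le\Vert T\Vert\,C_K\Vert u\Vert_{L^r(K)}$. Hence $\mu|_K$ extends to a bounded functional on $L^r(K)$. For $r\in(1,\infty)$ this gives $\mu|_K=F_K\,dm$ with $F_K\in L^{r'}(K)$; for $r=1$ it gives $F_K\in L^\infty(K)$. Exhausting $\Omega$ by compacts and using uniqueness, we obtain $\mu_\Omega=F\,dm$ with $F\in L^{r'}_{\loc}(\Omega)$. To show $F\in{\bf C}_{r',p'}$, we use the inequality in Proposition \ref{prop.duality} that bounds $\Vert\n C_{r'}H\Vert_{L^{p'}}$ by $\sup|\int Hu|$. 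We apply it to $H=F{\bf 1}_{K}$ and restrict to test functions $u$ supported in $K$; this restriction is legitimate because $\n C_{r'}(F{\bf 1}_K)$ only sees $u{\bf 1}_{K'}$ for a slightly larger compact $K'$, and by Lemma \ref{lm.nc} such $u{\bf 1}_{K'}$ lie in ${\bf N}_{r,p}$ and are limits of $C_c(K')$ functions in both norms. For these $u$ we get $\int F{\bf 1}_K u=T(u{\bf 1}_K)$, so $\Vert\n C_{r'}(F{\bf 1}_K)\Vert_{L^{p'}}\lesssim\Vert T\Vert$ uniformly in $K$. Monotone convergence as $K\uparrow\Omega$ then gives $F\in{\bf C}_{r',p'}$. (When $r=1$, the $L^\infty$ version of the duality is used; the same scheme should go through.)

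\emph{Boundary part.} Now $T_1(u):=T(u)-\int_\Omega uF\,dm$ is bounded on ${\bf N}_{r,p}$ by the first inequality of Proposition \ref{prop.duality}, and it is represented by $\mu_\partial$ on $C_c(\overline\Omega)$. For $\phi\in C_c(\partial\Omega)$, take any extension $u\in C_c(\overline\Omega)$, multiply it by a cutoff $\eta_\ep$ equal to $1$ near $\partial\Omega$ and supported in $B(\partial\Omega,\ep)$, and call the product $u_\ep$. Then $T_1(u_\ep)=\int\phi\,d\mu_\partial$ for every $\ep$. The key estimate is
\[
\limsup_{\ep\to0}\Vert\wt{\m N}_r(u_\ep)\Vert_{L^p(\sigma)}\lesssim\Vert\phi\Vert_{L^p(\sigma)}.
\]
Given this, $|\int\phi\,d\mu_\partial|\lesssim\Vert T_1\Vert\,\Vert\phi\Vert_{L^p(\sigma)}$, and $L^p$–$L^{p'}$ duality plus Riesz representation give $\mu_\partial=g\,d\sigma$ with $g\in L^{p'}(\sigma)$.

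The key estimate is the main obstacle. A naive extension of $\phi$ can have a non-tangential maximal function much larger than $\phi$. I would first try the Varopoulos extension of Lemma \ref{lm.varopoulos} applied to $P_t\phi$, where $P_t$ is from Lemma \ref{lm.approxid}. This extension is continuous up to the boundary with $\Vert\m N\Phi\Vert_{L^p}\lesssim\Vert\phi\Vert_{L^p}$, and $\wt{\m N}_r\le\m N$ pointwise, so no $\limsup$ is needed. Cutting it off keeps it in $C_c(\overline\Omega)$ with the same bound, since $|u_\ep|\le|\Phi|$. One must also approximate $\phi$ by Lipschitz functions in $L^p$, and continuity on $\partial\Omega$ gives $T_1(u_\ep)\to\int\phi\,d\mu_\partial$.

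\emph{Uniqueness.} It follows from the uniqueness of the Riesz measure together with the mutual singularity of $m|_\Omega$ and $\sigma$.
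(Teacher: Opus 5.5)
Your proof is correct, but it is organized differently from the paper's.

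\emph{The paper's route.} The paper first reduces to a positive $\mu$. This implicitly uses that $T^{\pm}$ are again bounded on ${\bf N}_{r,p}$, which holds because $\Vert\cdot\Vert_{{\bf N}_{r,p}}$ is a lattice norm. It then proves absolute continuity by hand in each region: $\mu(E)\le C\,m(E)^{1/r}$ for Borel $E$ in a compact $K\subset\Omega$, via bump functions and outer regularity; and $\mu(V_1)\le C\,\sigma(V_1)^{1/p}$ for relatively open $V_1\subset\partial\Omega$, via Varopoulos extensions of $P_{1/k}{\bf 1}_{V_1}$ and Fatou's lemma. Only after applying Radon--Nikodym does it run the duality arguments giving $F\in L^{r'}_{\loc}(\Omega)$ and $g\in L^{p'}(\sigma)$.

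\emph{Your route.} You obtain the density and its integrability in one stroke. The functional induced by $\mu$ on $C_c(U)$, for $U$ open with compact closure in $\Omega$ (resp.\ on $C_c(\partial\Omega)$), is bounded in the $L^r(U)$ (resp.\ $L^p(\sigma)$) norm. So it is represented by some $F_U\in L^{r'}(U)$ (resp.\ $g\in L^{p'}(\sigma)$), and uniqueness in the Riesz representation theorem identifies the measure. This makes both absolute-continuity arguments and the positivity reduction unnecessary. Your Carleson step is the same as the paper's. Do the density argument on such open $U$ rather than on arbitrary compacts $K$, since continuous functions supported in $K$ need not be dense in $L^r(K)$.

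\emph{The step you call the main obstacle.} It is elementary, so neither your argument nor the paper's actually needs Lemma \ref{lm.varopoulos} for this claim. Let $u\in C_c(\overline\Omega)$ extend $\phi$, and let $0\le\eta_\ep\le1$ with $\eta_\ep=1$ on $\partial\Omega$ and $\supp\eta_\ep\subset\{\delta\le\ep\}$. Every ball $B(x,\hat c\delta(x))$ with $x\in\gamma(\xi)$ on which $u\eta_\ep\not\equiv0$ lies in $B(\xi,C\ep)$. Hence
\[
\wt{\m N}_r(u\eta_\ep)(\xi)\le\big(|\phi(\xi)|+\omega_u(C\ep)\big)\,{\bf 1}_{\{\dist(\xi,\supp u)<C\ep\}},\qquad\xi\in\partial\Omega,
\]
where $\omega_u$ is the modulus of continuity of $u$. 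Thus $\limsup_{\ep\to0}\Vert\wt{\m N}_r(u\eta_\ep)\Vert_{L^p(\sigma)}\le\Vert\phi\Vert_{L^p(\sigma)}$. Since $\int_\Omega u\eta_\ep\,d\mu\to0$ by dominated convergence, you get $|\int_{\partial\Omega}\phi\,d\mu|\le\Vert T\Vert\,\Vert\phi\Vert_{L^p(\sigma)}$ directly. This needs neither the Carleson bound on $F$ nor any Lipschitz regularization of $\phi$.
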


Assume Claim \ref{claim.decomp} for the moment. Let $\wt T$ be the functional given by
\[
\wt T(u):=\int_{\Omega}uF\,dm+\int_{\partial\Omega}ug\,d\sigma,\qquad u\in{\bf N}_{r,p}.
\]
By Carleson's Theorem and H\"older's inequality, it is clear that $\wt T$ is a bounded linear functional on ${\bf N}_{r,p}$. Moreover, Claim \ref{claim.decomp} gives that $T(u)=\wt T(u)$ for all $u\in C_c(\overline\Omega)$. However, if $u\in{\bf N}_{r,p}$, then there exists a sequence $\{u_k\}\subset C_c(\overline\Omega)$ with $u_k\ra u$ in ${\bf N}_{r,p}$, and
\[
T(u)=\lim_kT(u_k)=\lim_k\wt T(u_k)=\wt T(u).
\]
Thus the identity (\ref{eq.decomp}) holds. Finally, $F$ and $g$ are unique since $\mu$ is unique by the Riesz Representation Theorem. This ends the proof of (ii) of the theorem, modulo the proof of Claim \ref{claim.decomp}.

\noindent\emph{Proof of Claim \ref{claim.decomp}.} By the linearity of the integral and the linearity of $T$, without loss of generality we treat only the case where $\mu$ is a positive measure on $\overline\Omega$. In this case, the identity (\ref{eq.ide}) gives us that $T$ is a positive linear functional on $C_c(\overline\Omega)$. Define
\[
\Omega_\tau:=\big\{x\in \Omega: \delta(x)> \tau\big\},\qquad\text{for each }\tau>0.
\]
We break the proof of the claim into several parts.

{\bf Absolute continuity of $\mu$ with respect to $m$ in compact subsets of $\Omega$.} Fix a compact set $K\subset\Omega$ and let $E\subset K$ be an arbitrary Borel set. We will prove that
\begin{equation}\label{eq.abscont}
\mu(E)\leq C m(E)^{\frac1r},
\end{equation}
Since $K\subset\Omega$ is compact and $\Omega$ is an open set,  $k:=\dist(K,\partial\Omega)>0$, and $\ell:=\diam(K)<+\infty$. Fix $z\in E$, let $U\subset\Omega$ be an arbitrary  open set that contains $E$, and for each $\tau>0$, define
\[
U_\tau:=U\cap\Omega_\tau\cap B(z,1000\ell).
\]
Note that for all $\tau,t\in(0,k]$, $U_\tau$ is an open set, $E\subseteq U_\tau\subseteq U_t\subseteq U$ if $t<\tau$, and   $\dist(U_\tau,\partial\Omega)\geq\tau$. Now let $\phi\in C_c(U_{k/2})$ be a continuous bump function with $\phi\equiv1$ on $U_k$ and $0\leq\phi\leq 1$. Then we have that
\[
\mu(U_k)=\int_{U_k}d\mu=\int_{\overline\Omega}{\bf 1}_{U_k}\,d\mu\leq\int_{\overline\Omega}\phi\,d\mu=T(\phi)\leq C\Vert\wt{\m N}_r(\phi)\Vert_{L^p(\partial\Omega)}.
\]
Let us control the term on the right-hand side. Note that for any $x\in\Omega$, if there exists $y\in B(x,\delta(x)/2)\cap U_{k/2}$, then $k/3<\delta(x)<C\ell$. Since $\supp\phi\in U_{k/2}$, it follows that
\[
\supp\wt{\m N}_r(\phi)\subset B(z,C\ell).
\]
Moreover, for each $\xi\in\partial\Omega$ and each $x\in\gamma(\xi)$, we have that 
\begin{equation}\nonumber
 \Big(\dashint_{B(x,\frac{\delta(x)}2)}|\phi|^r\,dm\Big)^{\frac1r}\leq\Big(\dashint_{B(x,\frac{\delta(x)}2)}{\bf 1}_{U_{k/2}}\,dm\Big)^{\frac1r}\leq C\delta(x)^{-\frac{n+1}r}m(U_{k/2})^{\frac1r}  \leq Ck^{-\frac{n+1}r}m(U_{k/2})^{\frac1r},
\end{equation}
where $C$ depends only on $n$ and $r$. Putting these observations together, it follows that
\[
\mu(U_k)\leq C\Vert\wt{\m N}_r(\phi)\Vert_{L^p(\partial\Omega)}\leq C\sigma\big(B(z,C\ell)\cap\partial\Omega\big)^{\frac1p}k^{-\frac{n+1}r}m(U_{k/2})^{\frac1r},
\]
or succinctly,
\[
\mu(U_k)\leq   C m(U_{k/2})^{\frac1r},\qquad\text{for all open }U\supset E.
\]
By the outer regularity of $\mu$, we see that
\[
\mu(E)\leq\mu(U_k),\qquad\text{for all open }U\supset E,
\]
so that
\begin{equation}\label{eq.abscont0}
\mu(E)\leq C m(U_{k/2})^{\frac1r},\qquad\text{for all open }U\supset E.
\end{equation}
Taking infimum over all open $U$ in $\Omega$ containing $E$ and using the outer regularity of the Lebesgue measure $m$, from (\ref{eq.abscont0}) we get (\ref{eq.abscont}), as desired.

Consequently, $\mu\ll m$ on compact sets in  $\Omega$. By a standard diagonalization argument it follows   that there exists a non-negative, locally integrable Borel measurable function $F:\Omega\ra\bb R$ such that $d\mu|_K=F\,dm|_K$ for any compact set $K\subset\Omega$; and moreover, for any $u\in C_c(\overline\Omega)$,
\begin{equation}\label{eq.decompose1}
	\int_{\overline\Omega}u\,d\mu=\int_{\Omega}u\,d\mu+\int_{\partial\Omega}u\,d\mu=\int_{\Omega}uF\,dm+\int_{\partial\Omega}u\,d\mu.
\end{equation}

{\bf Integrability property of $F$.} Let us show now that $F\in L^{r'}_{\loc}(\Omega)$. Fix a compact subset $K$ of $\Omega$. By the density of continuous functions in $L^{r}(K)$ and the duality of Lebesgue spaces (recall that $r\in[1,\infty)$), we know that $F\in L^{r'}(K)$ if and only if there exists a constant $C$ such that for each $v\in C_c(K)$, the estimate
\begin{equation}\label{eq.localint}\nonumber
\Big|\int_KvF\,dm\Big|\leq C\Vert v\Vert_{L^r(K)}
\end{equation}
holds. But any arbitrary $v\in C_c(K)$ can be extended trivially to $C_c(\overline\Omega)$ with $v\equiv0$ on $\overline\Omega\setminus K$. Using (\ref{eq.decompose1}) and (\ref{eq.ide}), it follows that
\[
\Big|\int_KvF\,dm\Big|=\Big|\int_\Omega vF\,dm\Big|=\Big|\int_{\overline\Omega}v\,d\mu\Big|=|T(v)|\leq C\Vert\wt{\m N}_rv\Vert_p\lesssim\Vert v\Vert_{L^r(K)},
\]
where in the last estimate we used Lemma \ref{lm.local}. We conclude that $F\in L^{r'}(K)$, as desired.

{\bf Carleson property of $F$.} We now show that $F\in{\bf C}_{r',p'}$. Fix $z\in\Omega$ such that $F(z)\neq0$,  for each $k\in\bb N$, let $K_k:=\Omega_{1/k}\cap B(z,1/k)$, and let $F_k:\Omega\ra\bb R$ be the function defined as
\[
F_k(x):=F(x){\bf 1}_{K_k}(x),\qquad\text{for each }x\in\Omega.
\]
It is clear that for each $k$, $F_k$ is a non-negative Borel-measurable, integrable function on $\Omega$, with  support in the compact set $K_k$, and  with $F_k\leq F_{k+1}$ on $\Omega$. Moreover we have that $F_k\ra F$ pointwise as $k\ra\infty$ in $\Omega$. 

We first show that for each $k$, $F_k\in{\bf C}_{r',p'}$.   By \cite[Proposition 2.3]{mpt25}, we have that
\begin{equation}\label{eq.est}
\Vert F_k\Vert_{{\bf C}_{r',p'}}\lesssim\sup_{u\in L^r_{\loc}(\Omega)\,:\,\Vert\wt{\m N}_ru\Vert_p=1}\Big|\int_\Omega uF_k\,dm\Big|,
\end{equation}
where we know the integral on the right-hand side is finite (for each fixed $u$) since $F_k\in L^{r'}(K_k)$ and $F_k$ is supported on $K_k$. Fix arbitrary $u\in L^r_{\loc}(\Omega)$ with $\Vert\wt{\m N}_ru\Vert_p=1$, and note  that
\[
\int_\Omega uF_k\,dm=\int_{\Omega}\Big(u(x){\bf 1}_{K_k}(x)\Big)F(x)\,dm(x).
\]
By Lemma \ref{lm.nc},   there is a sequence $\{v_j\}_j\subset C_c(K_k)$ such that $v_j\ra u{\bf1}_{K_k}$ strongly in ${\bf N}_{r,p}$ and in $L^r(K)$ as $j\ra\infty$, which coupled with the above identity implies that
\[
\int_\Omega uF_k\,dm=\lim_{j\ra\infty}\int_Kv_jF\,dm,
\]
since $F\in L^{r'}(K)$. On the other hand, using (\ref{eq.ide}), (\ref{eq.decompose1}), and the fact that $T$ is a bounded linear functional on ${\bf N}_{r,p}$, for each $j\in\bb N$ we have that
\[
\Big|\int_\Omega v_jF\,dm\Big|=|T(v_j)|\leq C\Vert v_j\Vert_{{\bf N}_{r,p}},
\]
where $C$ is independent of $j$ and $k$. When passing $j\ra\infty$, we see that
\[
\Big|\int_\Omega uF_k\,dm\Big|\leq C,
\]
Since $u$ was arbitrary, from (\ref{eq.est}) we conclude that 
\begin{equation}\label{eq.fk}
\Vert F_k\Vert_{{\bf C}_{r',p'}}\leq C.
\end{equation}
Hence $F_k\in{\bf C}_{r',p'}$ for each $k\in\bb N$, and moreover, the constant $C$ in (\ref{eq.fk}) is independent of $k$. By straightforward applications of Fatou's Lemma, one may show that
\[
\n C_{r'}(F)\leq\liminf_{k\ra\infty}\n C_{r'}(F_k)\qquad\text{on }\partial\Omega,
\]
whence, by   Fatou's Lemma again,
\[
\Vert\n C_{r'}(F)\Vert_{p'}^{p'}\leq\liminf_{k\ra\infty}\Vert\n C_{r'}(F_k)\Vert_{p'}^{p'}\leq C^{p'},
\]
where $C$ is the constant from (\ref{eq.fk}). Thus $F\in{\bf C}_{r',p'}$, as desired.

{\bf Absolute continuity of $\mu$ with respect to $\sigma$ on  compact subsets of $\partial\Omega$.} From (\ref{eq.decompose1}) and the arguments above, we fully understand the behavior of $\mu$ on $\Omega$. Since $\overline\Omega=\Omega\cup\partial\Omega$ and the latter two sets are disjoint, it remains only to understand the behavior of $\mu$ on $\partial\Omega$.  First we show that $\mu|_{\partial\Omega}\ll\sigma$. Fix a compact set $K\subset\overline\Omega$, and a non-empty  Borel set $S\subset K\cap\partial\Omega$. Since $\mu$ is  regular, we have that
\[
\mu(S)=\inf\big\{\mu(U)~:~ S\subset U  \text{ and } U \text{ is open in }\overline\Omega\big\}.
\]
If $U$ is an arbitrary open set in $\overline\Omega$ containing $S$, then note that $S\subset U\cap\partial\Omega$, whence 
\[
\mu(S)\leq\mu(U\cap\partial\Omega)\leq\mu(U).
\]
It follows that
\[
\mu(S)=\inf\big\{\mu(U\cap\partial\Omega)~:~ S\subset U  \text{ and } U \text{ is open in }\overline\Omega\big\}.
\]
Now fix an arbitrary open set $U$ in $\overline\Omega$ containing $S$, and let $V:=U\cap\partial\Omega$. Note that $V$ is relatively open in $\partial\Omega$, although $V$ is of course not open in $\overline\Omega$. Let $z\in S$, $\ell:=\diam K$ and write
\[
V_1:= V\cap B(z,2\ell).
\]
It is clear that $S\subset V_1\subset V$ and that $V_1$ is relatively open in $\partial\Omega$. Now let $h:\partial\Omega\ra\bb R$ be defined by
\[
h(\xi):={\bf 1}_{V_1}(\xi),\qquad\xi\in\partial\Omega.
\]
Then it is straightforward that $0\leq h\leq1$, $h\in L^p(\partial\Omega,\sigma)$, $\supp h=V_1$, and $h$ is continuous on each point of $V_1$, since $V_1$ is relatively open in $\partial\Omega$. Now, in the notation of Lemma \ref{lm.approxid}, let 
\[
f_k:=P_{\frac1k}h,\qquad\text{ for each  }k\in\bb N.
\]
By Lemma \ref{lm.approxid}, we see that $\{f_k\}\subset C_c(\partial\Omega)$ (since $h$ has bounded support), each $f_k$ is non-negative on $\partial\Omega$,
\begin{equation}\label{eq.ptwise}
f_k\lra h,\qquad\text{pointwise on }V_1 \text{ as }k\ra\infty,
\end{equation}
and $f_k\ra h$ strongly in $L^p(\partial\Omega,\sigma)$ as $k\ra\infty$. Then note that
\[
\mu(V_1)=\int_{\partial\Omega}h\,d\mu=\int_{V_1}h\,d\mu\leq\liminf_{k\ra\infty}\int_{V_1}f_k\,d\mu\leq\liminf_{k\ra\infty}\int_{\partial\Omega}f_k\,d\mu,
\]
where  we used Fatou's Lemma\footnote{Note carefully that we strongly needed $f_k\ra h$ pointwise in (\ref{eq.ptwise}), and not merely pointwise $\sigma$-a.e., since we do not a priori assume any relation between $\mu$ and $\sigma$.}. Next, for each $k$, let $u_k$ be the Varopoulos extension of $f_k$ onto $\Omega$, as in Lemma \ref{lm.varopoulos}. Then
\begin{equation}\label{eq.var1}
\int_{\partial\Omega}f_k\,d\mu=\int_{\partial\Omega}u_k\,d\mu=\Big[T(u_k)-\int_\Omega u_kF\,dm\Big] \leq\Big[C+\Vert F\Vert_{{\bf C}_{r',p'}}\Big]\Vert\wt{\m N}_r(u_k)\Vert_{L^p(\partial\Omega,\sigma)},
\end{equation}
where we used (\ref{eq.decompose1}) and Carleson's Theorem. In the previous estimate, $C$ is independent of $k$, $K$, $S$, $V$, $z$, and $\ell$.
Using Lemma \ref{lm.varopoulos}, we have that
\[
\Vert\wt{\m N}_r(u_k)\Vert_{L^p(\partial\Omega,\sigma)}\lesssim\Vert f_k\Vert_{L^p(\partial\Omega,\sigma)},\qquad\text{for each }k\in\bb N,
\]
and since $\Vert f_k\Vert_p\ra\sigma(V_1)^{\frac1p}$ as $k\ra\infty$, from the previous estimates we deduce that
\[
\mu(V_1)\leq C\sigma(V_1)^{\frac1p},
\]
where $C$ is independent of $S$, $K$, $V$, $z$, and $\ell$. In particular,
\[
\mu(S)\leq C\sigma(V)^{\frac1p}.
\]
Since $V=U\cap\partial\Omega$ and $U$ was an arbitrary open set in $\overline\Omega$ containing $S$, taking infimum over all such $U$ in the previous estimate and using the outer regularity of $\sigma$, we conclude that
\[
\mu(S)\leq\sigma(S)^{\frac1p}.
\]
Hence $\mu(S)\ra0$ as $\sigma(S)\ra0$, so that $\mu\ll\sigma$ on compact sets contained in $\partial\Omega$. Thus again by a diagonalization argument we conclude that there exists a non-negative Borel measurable function $g:\partial\Omega\ra\bb R$ such that $d\mu|_S=g\,d\sigma|_S$ for any Borel set $S\subset\partial\Omega$. In particular, for any $u\in C_c(\overline\Omega)$,
\[
\int_{\partial\Omega}u\,d\mu=\int_{\partial\Omega}ug\,d\sigma.
\]
Combining this last identity with (\ref{eq.decompose1}) gives the desired identity (\ref{eq.omega}), but it remains only to show that  $g\in L^{p'}(\partial\Omega,\sigma)$.

{\bf Integrability property of $g$.} In fact this is already essentially contained in the previous argument: we know that $g\in L^{p'}(\partial\Omega,\sigma)$ if and only if there exists a constant $C>0$ so that for all $f\in C_c(\partial\Omega)$, the estimate
\[
\Big|\int_{\partial\Omega}fg\,d\sigma\Big|\leq C\Vert f\Vert_{L^p(\partial\Omega,\sigma)}
\]
holds. Given an arbitrary $f\in C_c(\partial\Omega)$, let $u$ be the Varopoulos extension of $f$ onto $\Omega$ (see Lemma \ref{lm.varopoulos}). Then, proceeding similarly as in (\ref{eq.var1}), we see that
\[
\Big|\int_{\partial\Omega}fg\,d\sigma\Big|\leq\Big[C+\Vert F\Vert_{{\bf C}_{r',p'}}\Big]\Vert\wt{\m N}_r(u)\Vert_{L^p(\partial\Omega,\sigma)}.
\]
The desired estimate follows by using the fact that
\[
\Vert\wt{\m N}_r(u)\Vert_{L^p(\partial\Omega,\sigma)}\lesssim\Vert f\Vert_{L^p(\partial\Omega,\sigma)}.
\]
Thus $g\in L^{p'}(\partial\Omega,\sigma)$. This concludes the proof of Claim \ref{claim.decomp}, and with that, the end of the proof of Theorem \ref{thm.dual}.\hfill{$\square$}

\begin{remark}[On the dual of $\hat{\bf N}_{r,p}$]\label{rm.different2} As mentioned in Remark \ref{rm.different}, the space ${\bf N}_{r,p}$ that we consider is a strict subset of the space $\hat{\bf N}_{r,p}$ considered originally in \cite{hr13,mpt25}. It is natural to wonder if there is an analogous characterization of the dual space of $\hat{\bf N}_{r,p}$ as the one achieved in Theorem \ref{thm.dual} for the space ${\bf N}_{r,p}$. In fact, there is a corresponding natural result, easily gathered from the proof of Theorem \ref{thm.dual}, as follows.
	
First, each bounded linear functional $T$ on $\hat{\bf N}_{r,p}$ must still obey identity (\ref{eq.ide}) (valid for $u\in C_c(\overline\Omega)$) with the measure $\mu$ still satisfying the same decomposition from Claim \ref{claim.decomp}. Nevertheless, the identity (\ref{eq.decomp}) does not necessarily hold for arbitrary $u\in\hat{\bf N}_{r,p}$. 

Second, if $g\in L^{p'}(\partial\Omega,\sigma)$, and we define the linear functional $\ell_g$ on $C(\overline{\Omega})$ by
\[
(\ell_g,u):=\int_{\partial\Omega}g(\xi)u(\xi)\,d\sigma(\xi),\qquad u\in C(\overline\Omega),
\]
then by the Hahn-Banach Theorem and the fact that
\[
|(\ell_g,u)|\leq\Vert g\Vert_{L^{p'}(\sigma)}\Vert u\Vert_{L^p(\sigma)}\leq\Vert g\Vert_{L^{p'}(\sigma)}\Vert u\Vert_{{\bf N}_{r,p}},
\]
we may extend $\ell_g$ to a bounded linear functional on $\hat{\bf N}_{r,p}$. This is true despite the fact that the generic functions in $\hat{\bf N}_{r,p}$ do not have well-defined traces on $\partial\Omega$. The price paid from using the Hahn-Banach extension is the loss of uniqueness of the extended functional.
\end{remark}


\section{Proof of the Approximation Theorem}\label{sec.approx}

In this section we prove Theorem \ref{thm.approx}. Before we do so, let us briefly remark on some of the hypotheses and choice of parameters in the theorem.

\begin{remark}\label{rm.details} The boundedness of the domain is not essential, but it affords us a relatively simpler  proof. The parameter $2$ in ${\bf N}_{2,p}$ and in ${\bf C}_{2,p}$   can be relaxed to $r\in[2,\infty)$ everywhere in Theorem \ref{thm.approx}. Furthermore, if $A$ is locally Lipschitz continuous in $\Omega$, then the parameter can be relaxed to $r\in(1,\infty)$. The sequence $\{F_j\}_j$ depends on this parameter, but the constant in (\ref{eq.uniformc}) does not.
\end{remark}

\noindent\emph{Proof of Theorem \ref{thm.approx}.}

{\bf Part 1. Case $g$ is Lipschitz continuous.} 

\emph{Proof of \ref{item.c1}.} Suppose that $g\in\Lip(\partial\Omega)$ and let $\Phi=\Phi_g$ be the Varopoulos  extension of $g$ onto $\Omega$ as in Lemma \ref{lm.varopoulos}. Now, for each $j\in\bb N$ large enough (namely, such that $2^{-j}<\diam\partial\Omega/2$), let $\eta_j$ be a cut-off function satisfying that $\eta_j\equiv1$ on $B(\partial\Omega,2^{-(j+1)})$, $\supp\eta_j\subset B(\partial\Omega,2^{-j})$, $|\nabla\eta_j|\lesssim2^{j}$, and $0\leq\eta_j\leq1$. We set 
\begin{equation}\label{eq.fj0}
\tilde F_j:=-A\nabla(\Phi\eta_j).
\end{equation}
It is clear that 
\begin{equation}\label{eq.a1}
	|\tilde F_j|\leq|\eta_jA\nabla\Phi|+|\Phi A\nabla\eta_j|,
\end{equation}
and by Lemma \ref{lm.varopoulos} \ref{item.v2} and the boundedness of $A$, we immediately have that 
\[
\Vert\n C_\infty((\eta_jA\nabla\Phi))\Vert_{L^p(\sigma)}\lesssim\Vert g\Vert_{L^p(\sigma)}.
\]
It remains to control the second term on the right-hand side of (\ref{eq.a1}). We claim that
\begin{equation}\label{eq.carlesonineq}
	\n C_\infty(\Phi A\nabla\eta_j)\lesssim\m M(\m N\Phi),\qquad\text{pointwise on }\partial\Omega.
\end{equation}
Taking the claim for granted for a moment, we easily see by the properties of the Hardy-Littlewood maximal function and Lemma \ref{lm.varopoulos} \ref{item.v2} that
\[
\Vert\n C_\infty(\Phi A\nabla\eta_j)\Vert_{L^p(\sigma)}\lesssim\Vert\m N(\Phi)\Vert_{L^p(\sigma)}\lesssim\Vert g\Vert_{L^p(\sigma)}.
\]
These bounds together imply that $\tilde F_j\in{\bf C}_{\infty,p}$, and by H\"older's inequality in fact we have that $\tilde F_j\in{\bf C}_{q,p}$ for all $q\in(1,\infty]$. Now, for each $q\in(1,\infty)$, by the density of $\Lip_c(\Omega)$ in ${\bf C}_{q,p}$ (see \cite[Lemma 2.5]{mpt25}), we may find compactly supported Lipschitz vector functions $F_j$ such that
\begin{equation}\label{eq.fj}
\Vert\n C_q(F_j-\tilde F_j)\Vert_{L^p(\sigma)}\leq2^{-j}\Vert g\Vert_{L^p(\sigma)}.
\end{equation}
This finishes the proof of \ref{item.c1} when $g$ is Lipschitz continuous, modulo the claim.

\emph{Proof of Claim (\ref{eq.carlesonineq}).} Fix  $\xi\in\partial\Omega$. Note that $\nabla\eta_j$ is supported on the strip $S_j:=B(\partial\Omega,2^{-j})\setminus B(\partial\Omega,2^{-(j+1)})$, which allows us to write 
\begin{equation}\label{eq.c2}
	\n C_\infty(\Phi A\nabla\eta_j)(\xi)\lesssim\sup_{r>2^{-(j+1)}}\frac{2^j}{r^n}\int_{\Omega\cap B(\xi,r)\cap S_j}~\sup_{y\in B(x,\delta(x)/2)}|\Phi(y)|\,dm(x). 
\end{equation} 
Now fix $r\in(2^{-(j+1)},\diam\partial\Omega/2)$ and  $M\in\bb Z$ such that $2^M\geq r>2^{M-1}$. We need to decompose $\Omega\cap B(\xi,r)\cap S_j$ into annular regions. To this end, write
\[
A_{j,k}:=\big\{x\in\Omega\cap S_j~:~x\in B(\xi,2^{k+1})\setminus B(\xi,2^k)\big\},\qquad k\in\bb Z, ~k\geq-j+3.
\]
Next, note that for each $k\geq-j+3$, the collection of balls $\{B(x,\frac{\delta(x)}4)\}_{x\in A_{j,k}}$ is a Besicovitch covering for $A_{j,k}$, and so by the Besicovitch Covering Theorem we extract a finite collection $\{V_\ell\}_\ell=\{B(x_\ell,\frac{\delta(x_\ell)}4)\}_\ell$ of such balls with uniformly finite overlap (depending only on dimension). Hence
\begin{multline}\label{eq.annular}
	\int_{\Omega\cap B(\xi,r)\cap S_j}~\sup_{y\in B(x,\delta(x)/2)}|\Phi(y)|\,dm(x)\\ \leq\Big[\int_{\Omega\cap S_j\cap B(\xi,2^{-j+3})}~\sup_{y\in B(x,\delta(x)/2)}|\Phi(y)|\,dm(x)~+~\sum_{k=-j+3}^{M-1}\sum_\ell\int_{V_\ell}~\sup_{y\in B(x,\delta(x)/2)}|\Phi(y)|\,dm(x)\Big]\\=:I_{00}+\sum_{k=-j+3}^{M-1}I_k.
\end{multline}

We control $I_{00}$ first. Note that 
\[
\Omega\cap S_j\cap B(\xi,2^{-j+3})\subset\gamma_\alpha(\zeta),\qquad\text{for all }\zeta\in\partial\Omega\cap B(\xi,2^{-j+3})
\]
for $\alpha$ large enough (namely $\alpha\geq31$), and so 
\[
\sup_{y\in B(x,\delta(x)/2)}|\Phi(y)|\leq\m N_\alpha(\Phi)(\zeta),\qquad\text{for all }x\in\Omega\cap S_j\cap B(\xi,2^{-j+3}) \text{ and all }\zeta\in\partial\Omega\cap B(\xi,2^{-j+3}).
\]
It follows that
\[
I_{00}\leq\int_{\Omega\cap S_j\cap B(\xi,2^{-j+3})}\dashint_{\partial\Omega\cap B(\xi,2^{-j+3})}\m N_\alpha(\Phi)(\zeta)\,d\sigma(\zeta)\,dm(x)\approx2^{-j}\int_{\partial\Omega\cap B(\xi,2^{-j+3})}\m N_\alpha(\Phi)(\zeta)\,d\sigma(\zeta).
\]

We turn to $I_{k}$ for fixed $k\geq-j+3$. It is clear that for each $\ell$,
\[
\big\{y\in\Omega~:~y\in B(x,\tfrac{\delta(x)}2) \text{ for some }x\in V_\ell\big\}\subseteq B(x_\ell,\tfrac78\delta(x_\ell)),
\]
and if $\zeta_\ell\in\partial\Omega$ is such that $|x_\ell-\zeta_\ell|=\dist(x_\ell,\partial\Omega)=\delta(x_\ell)$, then
\[
B(x_\ell,\tfrac78\delta(x_\ell))\subset\gamma_\alpha(\zeta)\qquad\text{for each }\zeta\in W_\ell:=\partial\Omega\cap B(\zeta_\ell,\tfrac{1}8\delta(x_\ell)).
\]
for $\alpha\geq\frac{11}4$. Therefore, for each $\ell$,
\[
\sup_{y\in B(x,\delta(x)/2)}|\Phi(y)|\leq\m N(\Phi)(\zeta),\qquad\text{for all }\zeta\in W_\ell \text{ and all }x\in V_\ell.
\]
Hence, using that $\delta(x_\ell)\approx 2^{-j}$, it follows that
\begin{multline}\label{eq.c1}
	\sum_\ell\int_{V_\ell}~\sup_{y\in B(x,\delta(x)/2)}|\Phi(y)|\,dm(x)\leq\sum_\ell\int_{V_\ell}\dashint_{W_\ell}\m N(\Phi)(\zeta)\,d\sigma(\zeta)\,dm(x)\\ \lesssim\sum_\ell2^{-j(n+1)}\dashint_{W_\ell}\m N(\Phi)(\zeta)\,d\sigma(\zeta)\lesssim2^{-j}\sum_\ell\int_{W_\ell}\m N(\Phi)(\zeta)\,d\sigma(\zeta).
\end{multline}

We now show that there exists $N\in\bb N$ (depending only on dimension) such that for each $k$, each $\zeta\in\cup_\ell W_\ell$ can belong to at most $N$   sets in the union. Indeed, if $W_\ell$ and $W_m$ intersect for some $\ell,m$, then from the definition of $W_\ell$ and repeated application of the triangle inequality we deduce that $|x_\ell-x_m|<\frac942^{-j}$. It follows that if $\zeta\in\cap_{\ell'}W_{\ell'}$ for a subcollection  $\{W_{\ell'}\}_{\ell'}$, then
\[
\cup_{\ell'}V_{\ell'}\subset B(x_m,\tfrac522^{-j}),\qquad\text{for some }x_m \text{ in the subcollection }\{x_{\ell'}\}_{\ell'}.
\]
As the $V_{\ell}$ have uniformly finite overlap, it follows from the last containment that there can be at most a uniformly finite number $N$ of elements in the subcollection $\{V_{\ell'}\}_{\ell'}$. Using this result in (\ref{eq.c1}), we deduce that 
\[
\sum_\ell\int_{V_\ell}~\sup_{y\in B(x,\delta(x)/2)}|\Phi(y)|\,dm(x)\lesssim2^{-j}\int_{\cup_\ell W_\ell}\m N(\Phi)(\zeta)\,d\sigma(\zeta).
\]
Note that if $\zeta\in\cup_\ell W_\ell$, then $|\zeta-\xi|\approx2^k$ (with universal constants independent of dimension). Indeed, this follows by straightforward applications of the triangle inequality. Hence, there exist universal $a,b>0$ such that if $C_k:=\{\zeta\in\partial\Omega~:~a2^k\leq|\zeta-\xi|\leq b2^{k+1}\}$, then $\cup_\ell W_\ell\subset C_k$, and so
\[
I_k\lesssim2^{-j}\int_{C_k}\m N(\Phi)(\zeta)\,d\sigma(\zeta),\qquad k\geq-j+3.
\]
It is easy to see that the $C_k$'s have uniformly finite overlap, and so
\[
\sum_{k=-j+3}^{M-1}I_k\lesssim2^{-j}\int_{\partial\Omega\cap B(\xi,b2^M)\setminus B(\xi,a2^{-j+3})}\m N(\Phi)(\zeta)\,d\sigma(\zeta).
\]
Adding our estimates for $I_{00}$ and the sum of the $I_k$'s, from (\ref{eq.annular}) we see that
\[
\int_{\Omega\cap B(\xi,r)\cap S_j}~\sup_{y\in B(x,\delta(x)/2)}|\Phi(y)|\,dm(x)\lesssim2^{-j}\int_{\partial\Omega\cap B(\xi,b2^M)}\m N(\Phi)(\zeta)\,d\sigma(\zeta).
\]
Using this estimate in (\ref{eq.c2}), we have that
\begin{equation*} 
	\n C_\infty(\Phi A\nabla\eta_j)(\xi)\lesssim\sup_{r>2^{-(j+1)}}\frac{1}{r^n}\int_{\partial\Omega\cap B(\xi,Cr)}\m N(\Phi)(\zeta)\,d\sigma(\zeta)\lesssim\m M(\m N(\Phi))(\xi),
\end{equation*}
as desired.

\emph{Proof of \ref{item.c2}.} We continue to assume that  $g\in\Lip(\partial\Omega)$. Note that, in this case, by the Wiener regularity theorem we have that the solution to (\ref{eq.dirichlet0}) satisfies $u\in C(\overline\Omega)\cap W^{1,2}(\Omega)$ and $u|_{\partial\Omega}=g$ (both in the pointwise sense and in the sense of traces in the space $\m T(\partial\Omega)=W^{1,2}(\Omega)/W_0^{1,2}(\Omega)$). Let $w_j$ be as in \ref{item.c2} (with $F_j$ defined in the paragraph leading up to (\ref{eq.fj}), with $q=2$), and let us begin with \ref{item.c2} \ref{item.c2c}. By hypothesis $(\operatorname{D}_p^L)$ is solvable in $\Omega$, which implies by \cite[Theorem 1.2]{mpt25} that $(\operatorname{PD}_p^L)$ is solvable in $\Omega$, and therefore
\[
\Vert\widetilde{\m N}_2(w_j)\Vert_{L^p(\sigma)}\lesssim\Vert\n C_2(F_j)\Vert_{L^p(\sigma)}\lesssim\Vert g\Vert_{L^p(\sigma)},
\]
where we used (\ref{eq.uniformc}). Note also that  $u\in{\bf N}_{2,p}$ trivially. Let $p'$ be the H\"older conjugate of $p$. By \cite{hr13,mpt25}, we know that $({\bf C}_{2,p'})^*={\bf N}_{2,p}$; thus, to show the weak-$*$ convergence in ${\bf N}_{2,p}$ is precisely to show that
\begin{equation}\label{eq.conv}
\int_\Omega(w_j-u)H\,dm\lra0,\quad\text{as }j\ra\infty, \text{ for each }H\in{\bf C}_{2,p'}.
\end{equation} 

Assume first that $H\in\Lip_c(\Omega)$, and let $v\in W_0^{1,2}(\Omega)$ be the weak solution to the Poisson problem
\begin{equation}\label{eq.poissonregularity}
	\left\{\begin{aligned}-\dv A^T\nabla v&=H,\qquad&\text{in }&\Omega,\\ v&=0,\qquad&\text{on }&\partial\Omega.\end{aligned}\right. 
\end{equation}
Using the conormal derivative (see Definition \ref{def.conormal}), it follows that
\begin{equation}\label{eq.conv2}
	\int_\Omega(w_j-u)H\,dm=\int_\Omega A^T\nabla v\nabla(w_j-u)\,dm-(\partial_{\nu_{A^T}}v,w_j-u)=\int_\Omega A^T\nabla v\nabla(w_j-u)\,dm+(\partial_{\nu_{A^T}}v,u),
\end{equation}
where we used that $(\partial_{\nu_{A^T}}v,w_j)\equiv0$ for each $j$.
On the other hand, note that
\begin{multline}\label{eq.conv3}
	\int_\Omega A^T\nabla v\nabla(w_j-u)\,dm=\int_\Omega A^T\nabla v\nabla w_j\,dm-\int_\Omega A^T\nabla v\nabla u\,dm\\ =\int_\Omega A\nabla w_j\nabla v\,dm-\int_\Omega A\nabla u\nabla v\,dm=\int_\Omega F_j\nabla v\,dm,
\end{multline}
since $Lu=0$, $Lw_j=-\dv F_j$ in $\Omega$ and $v\in W_0^{1,2}(\Omega)$. Next, using the definition of $F_j$ and $\tilde F_j$ (see (\ref{eq.fj0}) and (\ref{eq.fj})), we have that
\begin{equation}\label{eq.conv31}
\int_\Omega F_j\nabla v\,dm=\int_\Omega(F_j-\tilde F_j)\nabla v\,dm-\int_\Omega A\nabla(\Phi\eta_j)\nabla v\,dm,
\end{equation}
and
\begin{equation}\label{eq.fjapprox}
\Big|\int_\Omega(F_j-\tilde F_j)\nabla v\,dm\Big|\lesssim\Vert\n C_2(F_j-\tilde F_j)\Vert_{L^p(\sigma)}\Vert\widetilde{\m N}_2(\nabla v)\Vert_{L^{p'}(\sigma)}\lesssim2^{-j}\Vert g\Vert_{L^p(\sigma)}\Vert\n C_2(H)\Vert_{L^p(\sigma)},
\end{equation}
where we used that the Poisson-Regularity problem $(\operatorname{PR}_{p'}^{L^*})$ is solvable in $\Omega$, since $(\operatorname{D}_p^L)$ is solvable in $\Omega$ (see \cite[Definition 1.6 and Theorem 1.8]{mpt25}). Next, we see that
\begin{equation}\label{eq.conv32}
-\int_\Omega A\nabla(\Phi\eta_j)\nabla v\,dm=-\int_\Omega A^T\nabla v\nabla(\Phi\eta_j)\,dm=-(\partial_{\nu_{A^T}}v,\Phi\eta_j)-\int_\Omega H\Phi\eta_j\,dm
\end{equation}
where we used the definition of the conormal derivative. Putting (\ref{eq.conv2}) together with  (\ref{eq.conv3}), (\ref{eq.conv31}), and (\ref{eq.conv32}), we get that
\begin{equation}\label{eq.conv4}
	\int_\Omega(w_j-u)H\,dm=-\int_\Omega H\Phi\eta_j\,dm~+~(\partial_{\nu_{A^T}}v,u-\Phi\eta_j)~+~\int_\Omega(F_j-\tilde F_j)\nabla v\,dm.
\end{equation}
The third term on the right-hand side of (\ref{eq.conv4}) vanishes as $j\ra\infty$ due to (\ref{eq.fjapprox}). The second term is identically $0$ for all $j$ since $u-\Phi\eta_j\in W_0^{1,2}(\Omega)$, and this is true by virtue of the fact that both $u$ and $\Phi\eta_j$ are in $W^{1,2}(\Omega)\cap C(\overline\Omega)$ and $u|_{\partial\Omega}=\Phi\eta_j|_{\partial\Omega}=g$ (see \cite{sz99}). As for the first term, notice that, since $H$ has compact support in $\Omega$ and $\supp\eta_j\subset B(\partial\Omega,2^{-j})$, it follows that for all $j$ large enough (depending on the support of $H$), this term becomes identically $0$. This establishes the convergence (\ref{eq.conv}) in the case that $H\in\Lip_c(\Omega)$. 

Now suppose that $H\in{\bf C}_{2,p'}$ is arbitrary. Since compactly supported Lipschitz functions are dense in ${\bf C}_{2,p'}$ (see \cite[Lemma 2.5]{mpt25}), there is a sequence $\{H_k\}_{k\in\bb N}\subset\Lip_c(\Omega)$ such that 
\[
\Vert\n C_2(H-H_k)\Vert_{L^{p'}(\sigma)}\ra0\qquad\text{as }k\ra\infty.
\]
We may write for each $j,k\in\bb N$,
\begin{equation}\label{eq.dense}
\int_\Omega(w_j-u)H\,dm=\int_\Omega(w_j-u)(H-H_k)\,dm~+~\int_\Omega(w_j-u)H_k\,dm=:I_1+I_2
\end{equation}
Note that
\begin{multline*}
|I_1|\lesssim\Vert\widetilde{\m N}_2(w_j-u)\Vert_{L^p(\sigma)}\Vert\n C_2(H-H_k)\Vert_{L^{p'}(\sigma)}\\ \leq\Big[\Vert\widetilde{\m N}_2(w_j)\Vert_{L^p(\sigma)}+\Vert\widetilde{\m N}_2(u)\Vert_{L^p(\sigma)}\Big]\Vert\n C_2(H-H_k)\Vert_{L^{p'}(\sigma)}\\ \lesssim\Big[\Vert\n C_2(F_j)\Vert_{L^p(\sigma)}+\Vert g\Vert_{L^p(\sigma)}\Big]\Vert\n C_2(H-H_k)\Vert_{L^{p'}(\sigma)}\lesssim\Vert g\Vert_{L^p(\sigma)}\Vert\n C_2(H-H_k)\Vert_{L^{p'}(\sigma)},
\end{multline*}
where we used Proposition \ref{prop.duality} and the solvability of $(\operatorname{PD}_p^L)$ \cite[Theorem 1.2]{mpt25}. Fix $\ep>0$, and then by the previous calculations we see that we can fix $k$ large enough so that $|I_1|<\ep/2$ for all $j\in\bb N$. With $k$ fixed, since $H_k\in\Lip_c(\Omega)$, we have that for all $j$ large enough, $|I_2|<\ep/2$, as we already proved (\ref{eq.conv}) when $H\in\Lip_c(\Omega)$. We conclude that $w_j\ra u$ in the weak$-*$ topology in ${\bf N}_{2,p}$.

That this convergence cannot be improved to the weak topology in ${\bf N}_{2,p}$ unless $g\equiv0$ follows from the fact that $L^{p'}(\partial\Omega)\subset({\bf N}_{2,p})^*$ from Theorem \ref{thm.dual}. More precisely, suppose that $w_j\ra u$ weakly in ${\bf N}_{2,p}$; then $\langle w_j,\phi\rangle\ra\langle u,\phi\rangle$ for each $\phi\in({\bf N}_{2,p})^*$. In particular, by Theorem \ref{thm.dual} this is true for arbitrary $\phi\in L^{p'}(\partial\Omega)$. However, since $w_j\equiv0$ on $\partial\Omega$ for each $j$, it follows that $\langle w_j,\phi\rangle\equiv0$ for any $\phi\in L^{p'}(\partial\Omega)$, which would imply that $\langle u,\phi\rangle=0$ for any $\phi\in L^{p'}(\partial\Omega)$, hence $g=u|_{\partial\Omega}\equiv0$ on $\partial\Omega$. This finishes the proof of \ref{item.c2} \ref{item.c2c}. 

We now prove \ref{item.c2} \ref{item.c2a} and \ref{item.c2} \ref{item.c2b}.  Recall that $\tilde F_j$ was defined in (\ref{eq.fj0}). Let $\tilde w_j\in W_0^{1,2}(\Omega)$ be the unique weak solution to the Poisson problem
\begin{equation}\label{eq.poissondirichlet1} \nonumber
	\left\{\begin{aligned}-\dv A\nabla\tilde w_j&=-\dv\tilde F_j,\qquad&\text{in }&\Omega,\\ \tilde w_j&=0,\qquad&\text{on }&\partial\Omega.\end{aligned}\right. 
\end{equation}
Then, for each $j,k\in\bb N$, note that $\tilde w_j-\tilde w_k$ must be the unique weak solution in $W_0^{1,2}(\Omega)$ to the problem
\begin{equation}\label{eq.poissondirichlet2} 
	\left\{\begin{aligned}-\dv A\nabla(\tilde w_j-\tilde w_k)&=-\dv(\tilde F_j-\tilde F_k),\qquad&\text{in }&\Omega,\\ \tilde w_j-\tilde w_k&=0,\qquad&\text{on }&\partial\Omega.\end{aligned}\right. 
\end{equation} 
On the other hand, note that the function $\Phi\eta_j-\Phi\eta_k\in W_0^{1,2}(\Omega)$ is also a solution to (\ref{eq.poissondirichlet2}). By the uniqueness, it follows that
\begin{equation}\label{eq.wdiff}
\tilde w_j-\tilde w_k\equiv\Phi(\eta_j-\eta_k),\qquad\text{for each }j,k\in\bb N.
\end{equation}
Let us now fix a compact set $K\subset\Omega$ and use the above observation to show that $\{w_j\}_j$ (defined by (\ref{eq.poissondirichlet0})) is a Cauchy sequence in $L^2(K)$. Indeed,
\[
w_j-w_k=(w_j-\tilde w_j)+(\tilde w_j-\tilde w_k)+(\tilde w_k-w_k)=T_1+T_2+T_3.
\]
Since $K$ is compact, there exists $\ell\in\bb N$ (depending on $K$) so that $2^{-\ell+1}<\dist(K,\partial\Omega)$. It follows that for all $j,k\geq\ell$, $\tilde w_j-\tilde w_k\equiv0$ on $K$, by (\ref{eq.wdiff}). Hence $T_2\ra0$ as $j,k\ra\infty$. As for $T_1$, note that
\begin{equation}\label{eq.closew}
\Vert w_j-\tilde w_j\Vert_{L^2(K)}\lesssim\Vert\widetilde{\m N}_2(w_j-\tilde w_j)\Vert_{L^p(\sigma)}\lesssim\Vert\n C_2(F_j-\tilde F_j)\Vert_{L^p(\sigma)}\leq2^{-j}\Vert g\Vert_{L^p(\sigma)},
\end{equation}
where we used the solvability of $(\operatorname{PD}_p^L)$ \cite[Theorem 1.2]{mpt25} and (\ref{eq.fjapprox}). Hence $T_1$ vanishes in $L^2(K)$ norm as $j\ra\infty$. In a completely analogous manner, the term $T_3$ will vanish in the $L^2(K)$ norm as $k\ra\infty$. 

We have thus shown that $\{w_j\}_j$ is Cauchy in $L^2(K)$, and therefore there exists   $w\in L^2(K)$ such that $w_j\ra w$ strongly in $L^2(K)$. By considering an exhausting sequence of compact sets $K_n$ in $\Omega$ (i.e. $K_m\subset K_{m+1}\subset\Omega$ and $\dist(K_m,\partial\Omega)\ra0$) and a straightforward diagonalization argument, we may take $w$ to be defined in all of $\Omega$ and $w_j\ra w$ in $L^2(K)$ for all compact sets $K\subset\Omega$.

By the previous result and the Caccioppoli inequality, it is an exercise to see that for each compact $K\subset\Omega$, $\{\nabla w_j\}$ is Cauchy in $L^2(K)$, and hence $w$ is weakly differentiable, and $w_j\ra w$ strongly in $W^{1,2}(K)$. In fact, as we have already shown \ref{item.c2} \ref{item.c2c}, from (\ref{eq.conv}) and the fact that $L^2(K)\subset{\bf C}_{2,p'}$, it follows that $\{w_j\}_j$ converges to $u$ weakly in $L^2(K)$, and so we must have that $w\equiv u$ as elements in $L^2(K)$, and hence pointwise almost everywhere in $\Omega$.  At this juncture we may conclude that $w_j\ra u$ strongly in $W^{1,2}_{\loc}(\Omega)$, which finishes the proof of \ref{item.c2} \ref{item.c2b}.

Finally, by the DeGiorgi-Nash-Moser theory, for each compact $K\subset\Omega$, the $C^\alpha(K)$ norm of $w_j-u$ is controlled by $\Vert w_j-u\Vert_{L^2(\tilde K)}$, where $\tilde K$ is compact and $K\subset\tilde K\subset\Omega$ (see for instance \cite[Theorem 4.11]{hlin}. As $w_j\ra u$ strongly in $L^2_{\loc}(\Omega)$, it follows that $w_j-u$ converges to $0$ in the $C^\alpha(K)$ norm. This ends the proof of \ref{item.c2} \ref{item.c2a}.

{\bf Part 2. Case $g\in L^p(\partial\Omega,\sigma)$.} 

Note that we have fully proven Theorem \ref{thm.approx} in the case that $g\in\Lip(\partial\Omega)$, and we will use this in the sequel. Fix $g\in L^p(\partial\Omega,\sigma)$. Since $\Lip(\partial\Omega,\sigma)$ is dense in $L^p(\partial\Omega,\sigma)$, we may find a sequence $\{g_k\}_k\subset\Lip(\partial\Omega,\sigma)$ such that $g_k\ra g$ strongly in $L^p(\partial\Omega,\sigma)$. For each $k\in\bb N$, let $u_k\in W^{1,2}(\Omega)\cap C(\overline\Omega)$ be the unique solution to the Dirichlet problem 
\begin{equation}\label{eq.dirichlet1}\nonumber
	\left\{\begin{aligned}-\dv A\nabla u_k&=0,\qquad&\text{in }&\Omega,\\ u_k&=g_k,\qquad&\text{on }&\partial\Omega,\end{aligned}\right.
\end{equation}
and let $\{F_{k,j}\}_j$ and $\{w_{k,j}\}_j$ be the sequences described in the conclusion of Theorem \ref{thm.approx} associated to $g_k$, and constructed explicitly as described in the proof of Part 1 above.

Now fix a  strictly increasing subsequence of natural numbers $\{j_k\}_k$ and define
\[
F_k:=F_{k,j_k},\qquad w_k:=w_{k,j_k}.
\]

\emph{Proof of \ref{item.c1}.} Simply note that for each $k\in\bb N$,
\[
\Vert\n C_2(F_{k,j_k})\Vert_{L^p(\sigma)}\lesssim\Vert g_k\Vert_{L^p(\sigma)}\lesssim\Vert g\Vert_{L^p(\sigma)},
\]
where we have used that the analogous result deduced in the case of Part 1.

\emph{Proof of \ref{item.c2}.} The statements in \ref{item.c2} \ref{item.c2a} and \ref{item.c2} \ref{item.c2b} follow directly from the analogous results of Part 1 and the straightforward observation that the convergence in those topologies also hold for $u_k\ra u$, since $u_k\ra u$ in ${\bf N}_{2,p'}$ (with essentially the same proof). We leave the details to the reader. 

We  show \ref{item.c2} \ref{item.c2c}. We must show that (\ref{eq.conv}) holds; as before, let us first fix $H\in\Lip_c(\Omega)$. For each $j\geq j_k$, we may write 
\begin{equation}\label{eq.poisson3}\nonumber
	\int_\Omega(w_k-u)H\,dm=\int_\Omega(w_{k,j_k}-w_{k,j})H\,dm+\int_\Omega(w_{k,j}-u_k)H\,dm+\int_\Omega(u_k-u)H\,dm=:T_1+T_2+T_3.
\end{equation}
Let us see why each of these terms vanish as $k\ra\infty$, starting with $T_3$. Note that
\[
|T_3|\lesssim\Vert\widetilde{\m N}_2(u_k-u)\Vert_{L^p(\sigma)}\Vert\n C_2(H)\Vert_{L^2(\sigma)}\lesssim\Vert g_k-g\Vert_{L^p(\sigma)}\Vert\n C_2(H)\Vert_{L^2(\sigma)},
\]
where we used that $(\operatorname{D}_p^L)$ is solvable in $\Omega$.  Next, set 
\[
\tilde F_{k,j}:=-A\nabla(\Phi_k\eta_j),\qquad k,j\in\bb N.
\]
and $\tilde w_{k,j}$ as the solution to the Poisson problem (\ref{eq.poissondirichlet0}) with $\tilde F_{k,j}$ in place of $F_j$. Then we write $T_1$ as follows:
\begin{equation}\label{eq.t1}
	T_1=\int_\Omega(w_{k,j_k}-\tilde w_{k,j_k})H\,dm+\int_\Omega(\tilde w_{k,j_k}-\tilde w_{k,j})H\,dm+\int_\Omega(\tilde w_{k,j}-w_{k,j})H\,dm=:T_{11}+T_{12}+T_{13}.
\end{equation}
Note that
\[
|T_{11}|\lesssim\Vert\widetilde{\m N}_2(w_{k,j_k}-\tilde w_{k,j_k})\Vert_{L^p(\sigma)}\Vert\n C_2(H)\Vert_{L^p(\sigma)}\lesssim2^{-j_k}\Vert g_k\Vert_{L^p(\sigma)}\leq2^{-j_k}\Vert g\Vert_{L^p(\sigma)},
\]
where we used the estimates in (\ref{eq.closew}). Similarly we may obtain 
\[
|T_{13}|\lesssim2^{-j}\Vert g\Vert_{L^p(\sigma)}\leq2^{-j_k}\Vert g\Vert_{L^p(\sigma)}.
\]
For $T_{12}$, we use  (\ref{eq.wdiff}) to see that
\[
T_{12}=\int_\Omega\Phi_k(\eta_{j_k}-\eta_j)H\,dm.
\]
By the definition of the cut-off functions $\eta_j$ and the fact that $H$ is compactly supported in $\Omega$, there exists $k^*$ large enough (depending on $H$) so that for all $k\geq k^*$, $\supp\eta_{j_k}\cap\supp H=\varnothing$. Since $\supp\eta_j\subset\supp\eta_{j_k}$ whenever $j\geq j_k$, we conclude that $T_{12}\equiv0$ for all $k\geq k^*$ and all $j\geq j_k$. These past few observations imply that $|T_1|$ can be made aribtrarily small for all $k$ large enough and all $j\geq j_k$. Finally,  $T_2$ vanishes as $j\ra\infty$ by (\ref{eq.conv}) for the case of Part 1, which we already showed.

The proof of (\ref{eq.conv})  in the full generality with  $H\in{\bf C}_{2,p'}$ follows similarly as in Part 1; one must use the following adaptation of (\ref{eq.carlesonineq}):
\begin{equation}\label{eq.carlesonineq2}
	\n C_\infty(\Phi_k A\nabla\eta_j)\lesssim\m M(\m N\Phi_k),\qquad\text{pointwise on }\partial\Omega,\qquad\text{for all }j,k\in\bb N.
\end{equation}
We leave the details to the interested reader. This ends the proof of \ref{item.c2} \ref{item.c2c}.\hfill{$\square$}

\bibliographystyle{alpha-sort-max} 
\bibliography{refs} 

\end{document}